\newtheorem{thm}{Theorem}[section]
\newtheorem{lem}[thm]{Lemma}
\newtheorem{pro}[thm]{Proposition}
\newtheorem{cor}[thm]{Corollary}
\theoremstyle{definition}
\newtheorem{defn}{Definition}[section] 
\newtheorem{remk}{Remark}[section]
\newtheorem{example}{Example}[section]
\newcommand{\M}{\mathbb M}
\newcommand{\R}{\mathbb R}
\newcommand{\K}{\mathcal K}
\newcommand{\tr}{\operatorname{tr}}
\newcommand{\supp}{\operatorname{supp}}
\newcommand{\dv}{\operatorname{div}}
\newcommand{\cof}{\operatorname{cof}}
\newcommand{\aint}{ {\int \hspace{-11pt} - \hspace{-1pt}}} 
\numberwithin{equation}{section}
\begin{document}

\title{On a class of special Euler-Lagrange equations}   

 \author{Baisheng Yan}
 \address{Department of Mathematics, Michigan State University, East Lansing, MI 48824, USA}    \email{yanb@msu.edu}

 \begin{abstract}
We make some remarks  on the  Euler-Lagrange equation of energy functional $I(u)=\int_\Omega f(\det Du)\,dx,$ where $f\in C^1(\R).$ For  certain weak solutions $u$ we show that the function $f'(\det Du)$ must  be a  constant  over the domain $\Omega$ and thus, when $f$ is convex, all such solutions are an energy minimizer of $I(u).$  However, other weak solutions exist such that $f'(\det Du)$ is not constant on $\Omega.$ We also prove some results concerning the   homeomorphism solutions, non-quasimonotonicty, radial solutions, and some special properties and questions in the  2-D cases.
   \end{abstract}
 
 \subjclass[2010]{35D30, 35M30,  49K20}

\keywords{Special Euler-Lagrange equations, homeomorphism  solutions, radial solutions}

\maketitle

\section{Introduction}
Let $n\ge 2$ be an integer. We denote by $\M^{n\times n}$  the standard space of real $n\times  n$ matrices with inner product 
\[
A:B =\sum_{i,j=1}^n A^i_j B^i_j \qquad \forall\; A=(A^i_j),\; \; B=(B^i_j)\in \M^{n\times n}.
\]
 Also denote by $A^T$, $\det A,$ and  $\cof A$ the transpose, determinant, and the cofactor matrix of  $A\in \M^{n\times n},$ respectively,  so that the identity $A^T\cof A=(\cof A)A^T=(\det A)I$ holds for all $A\in \M^{n\times n}.$ 
 
Let $ f\colon \R\to\R$ be a $C^1$ function; i.e., $f\in C^1(\R),$  and $\Omega$ be a bounded domain in $\R^n.$  We study the special  energy functional
 \begin{equation}\label{energy}
 I_\Omega(u)=\int_\Omega f(\det Du)dx
 \end{equation}
 for  functions $u\colon \Omega\to\R^n,$ where   $Du$ is the Jacobian matrix of $u$; that is, $(Du)^i_j=\partial u^i/\partial x_j$ if $u =(u^1, \dots,u^n)$ and $x=(x_1, \dots,x_n)\in\Omega.$   Such a functional  $I_\Omega$  and its Euler-Lagrange equations have been studied in \cite{Da81,ESG05, KMS, Ta02} concerning the energy minimizers,  weak solutions and the  gradient flows.  Our motivation to further study the functional $I_\Omega$  is closely related to the counterexamples on existence and regularity  given in  \cite{MSv2, Sz04, Ya1}. From  the identity
\[
 \partial \det A/\partial A^i_j=(\cof A)^i_j; \;\; \mbox{i.e.,} \;\;  D(\det A) = \cof A \quad \forall\, A=(A^i_j) \in \M^{n\times n},
\]
it follows that the Euler-Lagrange equation of   $I_\Omega$ is given by
 \begin{equation}\label{EL-f}
 \dv (f'(\det Du)\cof Du)=0 \quad\mbox{in $\Omega.$} 
 \end{equation}

By a  weak solution of equation (\ref{EL-f}) we mean  a function $u\in W^{1,1}_{loc}(\Omega;\R^n)$ such that $f'(\det Du)\cof Du\in L^1_{loc}(\Omega;\M^{n\times n})$ and
\begin{equation} \label{weak}
\int_\Omega f'(\det Du)\cof Du : D\zeta \,dx =0 \quad \forall\, \zeta\in C_c^1(\Omega;\R^n).
\end{equation} 
This is a very weak sense of solutions.  Usually, under suitable growth conditions on $f,$ the weak solutions are  studied in the space $u\in W^{1,p}_{loc}(\Omega;\R^n)$ for some $1\le p\le \infty$ with  $f'(\det Du)\cof Du\in L^{p'}_{loc}(\Omega;\M^{n\times n})$, where $p'= {p}/{(p-1)}$ is the H\"older conjugate of $p.$  However, in this paper, we will not specify the growth conditions on $f,$ but may assume some  higher integrability of $f'(\det Du)\cof Du.$ For example, if a weak solution $u$ satisfies, in addition, $f'(\det Du)\cof Du\in L^q(\Omega;\M^{n\times n})$ for some $q\ge 1$, then (\ref{weak}) holds for all $\zeta\in W^{1,q'}_0(\Omega;\R^n).$ 

It is well-known that $\dv(\cof Du)=0$    for all $u\in C^2(\Omega;\R^n)$ and thus by approximation   it holds that
\begin{equation} \label{weak-0}
\int_\Omega  \cof Du : D\zeta \,dx =0 \quad \forall\, u\in W^{1,n-1}_{loc}(\Omega;\R^n),\; \zeta\in C_c^1(\Omega;\R^n).
\end{equation} 
Consequently, any functions $u\in W^{1,n-1}_{loc}(\Omega;\R^n)$ with $f'(\det Du)$ being constant almost everywhere  on $\Omega$ are automatically a weak solution of equation (\ref{EL-f}); however, the converse  is false, at least when $n=2$ (see Example \ref{ex1} below).  Some formal calculations based on the identity $\dv(\cof Du)=0$ show  that $f'(\det Du)$ must be constant on $\Omega$ if  $u\in C^2(\Omega;\R^n)$ is a classical  solution of  (\ref{EL-f}) (see \cite{ESG05,Ta02}). We prove that  the same  result holds  for all weak solutions $u\in C^1(\Omega;\R^n)$   (see also  \cite{KMS}) or  $u\in W^{1,p}_{loc}(\Omega;\R^n)$ with $f'(\det Du)\in W_{loc}^{1,\bar p}(\Omega)$ for some $p\ge n-1$ and $\bar p=p/(p+1-n).$ 
If, in addition, $f$ is convex,  then all  weak solutions $u\in W^{1,n}_{loc}(\Omega;\R^n)$  of  (\ref{EL-f}) with   $f'(\det Du)$ being constant almost everywhere on $\Omega$ must  be an energy minimizer of the energy $I_G$ among all $W^{1,n}(G;\R^n)$ functions having  the same boundary value as $u$ on $\partial G$ for all subdomains $G$ of $\Omega.$ 

We study further conditions under which a weak solution  $u$ of  (\ref{EL-f})  must have constant $f'(\det Du)$ almost everywhere on $\Omega.$ In particular, we prove that this is the case for homeomorphism weak solutions $u\in W^{1,p}(\Omega;\R^n)$ for $p>n$ (see Theorem \ref{thm-homeo}).  When the domain $\Omega$ is a ball, we study the  radially symmetric weak solutions and establish a specific characterization about certain weak solutions (see Section \ref{sec-radial}). In the 2-D cases, the equation (\ref{EL-f}) is reformulated as a first-order partial differential relation \cite{KMS,MSv2} and we prove some interesting results  for such a relation  (see Section \ref{sec-2-D}).

\section{Some general results for all dimensions}
In what follows, unless otherwise specified, we assume   $h\in C(\R)$ is a given function. Recall that  $u\in W^{1,1}_{loc}(\Omega;\R^n)$ is a weak solution of  equation
\begin{equation}\label{EL-h}
 \dv (h(\det Du)\cof Du)=0 \quad\mbox{in $\Omega,$} 
 \end{equation}
 provided that  $h(\det Du)\cof Du\in L^1_{loc}(\Omega;\M^{n\times n})$ and
\begin{equation} \label{weak-1}
\int_\Omega h(\det Du)\cof Du : D\zeta \,dx =0 \quad \forall\, \zeta\in C_c^1(\Omega;\R^n).
\end{equation}
If, in addition, $h(\det Du)\cof Du\in L_{loc}^q(\Omega;\M^{n\times n})$ for some $q\ge 1,$ then equation (\ref{weak-1}) holds for all $\zeta\in W^{1,q'}(\Omega;\R^n)$ with compact support in $\Omega,$  where $q'=\frac{q}{q-1}.$
 
\subsection{Formal calculations and some  implications} It is well-known that   $\dv(\cof Dw)=0
$ holds in $\Omega$ for all $w\in C^2(\Omega;\R^n).$  From this, we easily have  the following point-wise identity:
\[
\dv(a \cof Dw )=(\cof Dw) Da \quad \forall\,a\in C^1(\Omega),\;w\in C^2(\Omega;\R^n).
\]
Thus, for all $a\in C^1(\Omega),\;w\in C^2(\Omega;\R^n)$ and $\zeta\in C^1_c(\Omega;\R^n),$ it follows that
\begin{equation}\label{0-id}
\int_\Omega  a \cof Dw  :D\zeta\,dx =-\int_\Omega (\cof Dw) Da\cdot \zeta\,dx. 
\end{equation}

\begin{lem}\label{lem-0} The identity (\ref{0-id}) holds for all functions 
\[
w\in W^{1, p}_{loc}(\Omega;\R^n),\quad a\in W^{1,\bar p}_{loc}(\Omega;\R^n), \quad  \zeta\in C_c^1(\Omega;\R^n),
\]
 where $n-1\le p\le \infty$ and $\bar p=\frac{p}{p+1-n}$ are given numbers.
 \end{lem}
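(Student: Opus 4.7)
The plan is to establish the identity for smooth $w_\epsilon = \rho_\epsilon \ast w$ and $a_\epsilon = \rho_\epsilon \ast a$ by standard mollification, and then to pass to the limit $\epsilon\to 0$. Fix $\zeta \in C_c^1(\Omega;\R^n)$ and choose an open set $U$ with $\supp\zeta \Subset U \Subset \Omega$. For $\epsilon$ small enough, $w_\epsilon \in C^\infty(U;\R^n)$ and $a_\epsilon \in C^\infty(U)$, so the identity (\ref{0-id}) derived in the paragraph preceding the lemma applies:
\[
\int_\Omega a_\epsilon \cof Dw_\epsilon : D\zeta\,dx = -\int_\Omega (\cof Dw_\epsilon)\,Da_\epsilon \cdot \zeta\,dx.
\]
The entire task reduces to justifying the passage to the limit on each side.

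The choice $\bar p = p/(p+1-n)$ is dictated by the Hölder relation $\frac{n-1}{p}+\frac{1}{\bar p}=1$. Since each entry of $\cof A$ is a polynomial of degree $n-1$ in the entries of $A$, repeated Hölder gives $\cof Dw \in L^{p/(n-1)}_{loc}(\Omega;\M^{n\times n})$; pairing with $a\in L^{\bar p}_{loc}$ or with $Da\in L^{\bar p}_{loc}$ then places both integrands in $L^1_{loc}(\Omega)$. For the convergence, a telescoping decomposition of the product of $n-1$ entries combined with Hölder yields
\[
\|\cof Dw_\epsilon - \cof Dw\|_{L^{p/(n-1)}(U)} \leq C\bigl(\|Dw_\epsilon\|_{L^p(U)}+\|Dw\|_{L^p(U)}\bigr)^{n-2}\|Dw_\epsilon - Dw\|_{L^p(U)},
\]
which tends to $0$. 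Together with the standard mollifier convergences $a_\epsilon \to a$ in $L^{\bar p}(U)$ and $Da_\epsilon \to Da$ in $L^{\bar p}(U)$, the decomposition
\[
a_\epsilon \cof Dw_\epsilon - a\cof Dw = (a_\epsilon - a)\cof Dw_\epsilon + a\,(\cof Dw_\epsilon - \cof Dw)
\]
(and its analogue $(\cof Dw_\epsilon)Da_\epsilon - (\cof Dw)Da = (\cof Dw_\epsilon - \cof Dw)Da_\epsilon + (\cof Dw)(Da_\epsilon - Da)$) lets Hölder's inequality close the limits on both sides.

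The main subtlety I anticipate is the two borderline cases $p = n-1$ (so $\bar p = \infty$) and $p = \infty$ (so $\bar p = 1$), where mollification does not converge in norm on the $L^\infty$ side. There I would replace strong norm convergence on that side by uniform boundedness together with pointwise convergence almost everywhere, and invoke Vitali's (or the dominated) convergence theorem, using the uniform integrability of the companion $L^{p/(n-1)}$-side factor. Once these endpoint technicalities are dispatched, the identity follows from the density of smooth functions.
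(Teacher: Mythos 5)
Your proof is correct and follows essentially the same route as the paper's: the paper simply notes that $\cof Dw\in L^{p/(n-1)}_{loc}$, that $\bar p$ and $p/(n-1)$ are H\"older conjugates, and then appeals to ``standard approximation arguments.'' You have filled in exactly those standard arguments (mollification, the telescoping estimate for $\cof Dw_\epsilon-\cof Dw$, and the Vitali/dominated-convergence treatment of the endpoint cases $p=n-1$ and $p=\infty$), so the two proofs agree in both strategy and substance.
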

 
\begin{proof}\    Note that if $w\in W^{1, p}_{loc}(\Omega;\R^n)$ then $\cof Dw\in L^{\frac{p}{n-1}}_{loc}(\Omega;\M^{n\times n}).$ Since $\bar p$ and $\frac{p}{n-1}$ are H\"older conjugate numbers, it follows,  by the standard approximation arguments,  that the identity (\ref{0-id}) holds for the given functions.  
\end{proof}

   \begin{lem}\label{lem-a} Let $
 H(t)=th(t)-\int_0^t h(s)ds.
$
Then 
\begin{equation}\label{part-0}
 \int_\Omega  h(\det Dw) \cof Dw :D((Dw) \phi)   = \int_\Omega H(\det Dw) \dv\phi  
 \end{equation}
holds for all  $w\in C^2(\Omega;\R^n)$ and 
 $\phi\in C_c^1(\Omega;\R^n).$ 
 \end{lem}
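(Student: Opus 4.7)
The plan is to expand the left-hand side pointwise, use the cofactor identity $(Dw)^T\cof Dw = (\det Dw) I$ and the Jacobi formula $D(\det Dw) = \cof Dw : D(Dw)$ to split it into two pieces, and then integrate by parts on one of them.

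First I would apply the product rule to the vector field $(Dw)\phi$, whose $i$-th component is $\sum_j (\partial_j w^i)\phi^j$, obtaining
\begin{equation*}
D((Dw)\phi) = (D^2 w)\cdot \phi + (Dw)(D\phi),
\end{equation*}
where the first term has entries $\sum_j \partial_k\partial_j w^i\, \phi^j$. For the second term, a direct computation gives
\begin{equation*}
\cof Dw : (Dw)(D\phi) = \sum_{j,k} \partial_k \phi^j \bigl[(Dw)^T \cof Dw\bigr]^j_k = (\det Dw)\,\dv\phi,
\end{equation*}
using the cofactor identity. For the first term, the Jacobi formula $\partial_j(\det Dw) = \sum_{i,k}(\cof Dw)^i_k \partial_j\partial_k w^i$ yields
\begin{equation*}
\cof Dw : \bigl((D^2 w)\cdot \phi\bigr) = \phi\cdot D(\det Dw).
\end{equation*}
Multiplying by $h(\det Dw)$ gives the pointwise identity
\begin{equation*}
h(\det Dw)\cof Dw : D((Dw)\phi) = (\det Dw)\,h(\det Dw)\,\dv\phi + \phi\cdot D\bigl(F(\det Dw)\bigr),
\end{equation*}
where $F(t)=\int_0^t h(s)\,ds$ is a $C^1$ antiderivative of $h$.

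Next I would integrate over $\Omega$. Since $w\in C^2$ and $\phi\in C_c^1$, the function $F(\det Dw)$ is $C^1$ and $\phi F(\det Dw)$ is compactly supported in $\Omega$, so the classical integration by parts gives
\begin{equation*}
\int_\Omega \phi\cdot D(F(\det Dw))\,dx = -\int_\Omega F(\det Dw)\,\dv\phi\,dx.
\end{equation*}
Combining this with the pointwise identity and using $H(t)=th(t)-F(t)$ immediately produces \eqref{part-0}.

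No significant obstacle is expected: the proof is a straightforward computation, and the only step that requires care is verifying the Jacobi-type identity $\cof Dw : ((D^2 w)\cdot \phi) = \phi\cdot D(\det Dw)$, which follows from differentiating $\det$ and exchanging the order of partial derivatives (valid since $w\in C^2$). The $C^2$ regularity of $w$ also makes the integration by parts step elementary, so no approximation argument of the type used in Lemma \ref{lem-0} is needed here.
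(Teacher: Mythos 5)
Your proof is correct, and it takes a genuinely different route from the paper. The paper's proof applies the integration-by-parts identity \eqref{0-id} (which rests on $\dv(\cof Dw)=0$) with $a=h(\det Dw)$ and $\zeta=(Dw)\phi$; because it then needs $Da=h'(\det Dw)D(\det Dw)$, it must first assume $h\in C^1(\R)$ and afterwards dispose of the general case by approximating $h$ by $C^1$ functions. You instead expand $D((Dw)\phi)$ pointwise via the product rule, split $\cof Dw:D((Dw)\phi)$ into the two terms $(\det Dw)\dv\phi$ and $\phi\cdot D(\det Dw)$ using the cofactor identity and the Jacobi formula, and then absorb the factor $h(\det Dw)$ into the exact derivative $D(F(\det Dw))$ with $F(t)=\int_0^t h(s)\,ds$. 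Since $F$ is automatically $C^1$ whenever $h$ is merely continuous, and $\det Dw\in C^1$, this sidesteps the approximation of $h$ entirely and handles $h\in C(\R)$ in one pass. Both arguments hinge on the same algebraic facts ($(Dw)^T\cof Dw=(\det Dw)I$, $D(\det Dw)=\cof Dw:D(Dw)$, and one integration by parts against a compactly supported $\phi$); yours is slightly more self-contained and avoids the density step, while the paper's reuses the already-established Lemma \ref{lem-0} machinery.
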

 \begin{proof}\  First we assume $h\in C^1(\R)$ and let $a (x)=h(\det Dw(x)).$ Then $a\in C^1(\Omega)$ and $Da=h'(\det Dw )  D(\det Dw ).$ Take $\zeta=(Dw)\phi\in C_c^1(\Omega;\R^n)$ in (\ref{0-id}) and, from $Dw^T \cof Dw=(\det Dw)I$ and  $H'(t)=th'(t),$ we have \[
  \begin{split}
 \int_\Omega  &h(\det Dw) \cof Dw :D((Dw) \phi)  \\
 &=-\int_\Omega h'(\det Dw ) (\cof Dw )D(\det Dw ) \cdot (Dw)\phi\\
 &= -\int_\Omega  h'(\det Dw) (\det Dw) D(\det Dw ) \cdot \phi   \\
  &= -\int_\Omega  H'(\det Dw) D(\det Dw ) \cdot \phi  \\
   & = -\int_\Omega   D(H(\det Dw) ) \cdot \phi\\&  =  \int_\Omega  H(\det Dw) \dv \phi.  \end{split}
 \]
 This proves  the identity (\ref{part-0}) for $h\in C^1(\R).$ The proof  of (\ref{part-0})  for $h\in C(\R)$ follows by approximating $h$ by $C^1$ functions.  \end{proof}
 
\begin{pro}\label{pro23}  Let $u\in C^2(\Omega;\R^n)$ be a weak solution of equation (\ref{EL-h}). Then $h(\det Du)$ is constant on  $\Omega.$ 
     \end{pro}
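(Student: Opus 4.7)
My plan is to combine Lemma \ref{lem-a} with the weak formulation \eqref{weak-1} for a cleverly chosen test field, and then to upgrade the resulting constancy of $H(\det Du)$ to constancy of $h(\det Du)$ via a short ODE-type argument.

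Since $u\in C^2(\Omega;\R^n)$, for every $\phi\in C_c^1(\Omega;\R^n)$ the vector field $(Du)\phi$ again belongs to $C_c^1(\Omega;\R^n)$ and is thus an admissible test field in \eqref{weak-1}. Substituting $\zeta=(Du)\phi$ and comparing with the identity of Lemma \ref{lem-a} applied to $w=u$ yields
\[
\int_\Omega H(\det Du)\,\dv\phi\,dx = 0 \qquad \forall\,\phi\in C_c^1(\Omega;\R^n).
\]
The function $H(\det Du)$ is continuous on $\Omega$ (since $\det Du\in C^1(\Omega)$ and $H\in C(\R)$), so this distributional identity means that $H(\det Du)$ is locally constant; by connectedness of $\Omega$ we obtain $H(\det Du)\equiv c$ for some $c\in\R$.

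It remains to promote the constancy of $H(\det Du)$ to that of $h(\det Du)$. Let $I:=(\det Du)(\Omega)\subset\R$; as the continuous image of the connected set $\Omega$, $I$ is an interval. The identity $H\equiv c$ on $I$ reads
\[
t\,h(t) = c + \int_0^t h(s)\,ds \qquad \text{for every } t\in I.
\]
Because the right-hand side lies in $C^1(\R)$ with derivative $h$, the same is true of $t\mapsto th(t)$ on $I$. On $I\setminus\{0\}$ we may therefore write $h(t) = \tfrac{th(t)}{t}$ and conclude $h\in C^1(I\setminus\{0\})$; the product rule applied to $(th(t))'=h(t)$ then gives $th'(t)=0$, whence $h'(t)=0$. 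Thus $h$ is constant on each connected component of $I\setminus\{0\}$, and the continuity of $h$ at $0$ (should $0\in I$) glues these values into a single constant. Therefore $h$ is constant on $I$, and $h(\det Du)$ is constant on $\Omega$.

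The principal obstacle is the second step: the auxiliary function $H$ provided by Lemma \ref{lem-a} is in general not injective (indeed $H\equiv 0$ whenever $h$ is constant), so the implication from $H(\det Du)$ constant to $h(\det Du)$ constant is not purely algebraic but rests on the analytic argument above, which is somewhat delicate since $h$ is only assumed continuous.
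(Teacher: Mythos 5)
Your proposal is correct and follows essentially the same route as the paper's proof: substitute $\zeta=(Du)\phi$ into the weak formulation, invoke Lemma~\ref{lem-a} to conclude $H(\det Du)\equiv c$, and then deduce that $h$ is constant on the image interval of $\det Du$ by analyzing the integral equation $th(t)-\int_0^t h(s)\,ds=c$. Your ODE-type argument simply makes explicit the step the paper compresses into ``solving for $h(t)$ from the integral equation,'' and your direct argument on $I=(\det Du)(\Omega)$ replaces the paper's phrasing by contradiction, but the underlying idea is identical.
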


 \begin{proof}\  Let $d(x)=\det Du(x);$  then $d\in C^1(\Omega).$ By (\ref{weak-1}) and (\ref{part-0}), it follows that $\int_\Omega H(d(x))\dv\phi \,dx=0$ for all $\phi\in C_c^1(\Omega;\R^n);$  hence  $H(d(x))=C$ is a constant on $\Omega.$  Suppose, on the contrary, that $h(d(x_1))\ne h(d(x_2))$ for  some $x_1,x_2\in \Omega.$ Then $d_1=d(x_1)\ne d_2=d(x_2),$ say $d_1<d_2.$ Since $d(x)$ is continuous, by the intermediate value theorem, it follows that $H(t)=C$  for all $t\in [d_1,d_2].$ Solving for $h(t)$ from the integral equation $th(t)-\int_0^t h(s)ds=C$ on $(d_1,d_2)$, we obtain that $h(t)$ is constant on $(d_1,d_2),$ contradicting $h(d_1)\ne h(d_2).$ 
\end{proof}

\begin{pro}\label{pro24}  Let $p\ge n-1$, $\bar p=\frac{p}{p+1-n},$ and  $u\in W^{1, p}_{loc}(\Omega;\R^n)$ be a weak solution of equation (\ref{EL-h}) such that  $h(\det Du)\in W^{1,\bar p}_{loc}(\Omega).$  Then $h(\det Du)$ is constant   almost everywhere  on   $ \Omega.$ 
     \end{pro}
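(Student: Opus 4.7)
The plan is to imitate the proof of Proposition~\ref{pro23}, but since $u\in W^{1,p}_{loc}(\Omega;\R^n)$ need not admit a classical gradient of $\det Du$, I would bypass Lemma~\ref{lem-a} and invoke Lemma~\ref{lem-0} directly, with $w=u$ and $a=h(\det Du)$. By hypothesis $h(\det Du)\in W^{1,\bar p}_{loc}(\Omega)$, and the exponent $\bar p$ is Hölder-conjugate to $p/(n-1)$, so Lemma~\ref{lem-0} applies and yields
\[
\int_\Omega h(\det Du)\,\cof Du : D\zeta\,dx \;=\; -\int_\Omega (\cof Du)\, D(h(\det Du))\cdot \zeta\,dx \qquad \forall\,\zeta\in C_c^1(\Omega;\R^n).
\]
The left-hand side vanishes because $u$ is a weak solution of (\ref{EL-h}); the fundamental lemma of the calculus of variations (applied to the $L^1_{loc}$ vector field $(\cof Du)\,D(h(\det Du))$) then forces
\[
(\cof Du)\,D(h(\det Du)) \;=\; \mathbf 0 \qquad \text{a.e.\ in } \Omega.
\]

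To promote this vector identity to $D(h(\det Du))=\mathbf 0$ a.e., I would decompose $\Omega$, up to a null set, into the two measurable pieces $A=\{x:\det Du(x)\ne 0\}$ and $B=\{x:\det Du(x)=0\}$. On $A$, the algebraic identity $(\cof Du)(Du)^T=(\det Du)I$ makes $\cof Du$ pointwise invertible, so $D(h(\det Du))=\mathbf 0$ a.e.\ on $A$. On $B$, the composite function $h(\det Du)$ equals the constant value $h(0)$; invoking the standard Stampacchia-type fact that the weak gradient of any $W^{1,1}_{loc}$ function vanishes a.e.\ on any of its level sets (applied to the level $\{h(\det Du)=h(0)\}\supseteq B$) gives $D(h(\det Du))=\mathbf 0$ a.e.\ on $B$ as well. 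Combining the two contributions, $D(h(\det Du))=\mathbf 0$ a.e.\ in $\Omega$, and the connectedness of the domain $\Omega$ yields that $h(\det Du)$ is constant almost everywhere.

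The main obstacle to anticipate is the degenerate set $B$: the weak formulation provides no information about $\cof Du$ precisely where it fails to be invertible, so without the Stampacchia-type observation on level sets the argument would stall at the intermediate identity $(\cof Du)\,D(h(\det Du))=\mathbf 0$. Everything else is a routine combination of Lemma~\ref{lem-0}, Hölder-conjugacy of $\bar p$ and $p/(n-1)$, and the standard $L^1$-duality principle.
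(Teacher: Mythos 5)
Your proposal is correct and follows essentially the same route as the paper: apply Lemma~\ref{lem-0} with $w=u$ and $a=h(\det Du)$, conclude $(\cof Du)\,D(h(\det Du))=0$ a.e., then split $\Omega$ into the sets $\{\det Du\ne 0\}$ and $\{\det Du=0\}$. The only cosmetic difference is that you spell out the Stampacchia level-set fact, which the paper invokes implicitly in the phrase ``because $h(\det Du)=h(0)$ a.e.\ on $E$.''
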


 \begin{proof}\  With $a=h(\det Du)\in W^{1,\bar p}_{loc}(\Omega)$ and $w=u\in W^{1, p}_{loc}(\Omega;\R^n)$ in Lemma \ref{lem-0} and by (\ref{weak-1}), we have 
 \[
0= \int_\Omega  h(\det Du)\cof Du:D \zeta
  = -\int_\Omega  (\cof Du)D(h(\det Du)) \cdot \zeta 
\]
for all $\zeta\in C_c^1(\Omega;\R^n).$ As a result, we have  $ (\cof Du)D(h(\det Du ))=0$ a.e.\,on $\Omega.$ Thus $D(h(\det Du ))=0$ a.e.\,on the set $\Omega_0=\{x\in\Omega:\det Du(x)\ne 0\}.$ Clearly, $D(h(\det Du ))=0$ a.e.\,on the set $E=\{x\in\Omega:\det Du(x) = 0\}$ because $h(\det Du)=h(0)$ a.e.\,on $E.$ Therefore, $D(h(\det Du ))=0$ a.e.\,on the whole domain $\Omega;$ this proves that $h(\det Du)$ is constant a.e.\,on $\Omega.$  
\end{proof}

\subsection{Change of variables}  The formal calculations leading to Proposition \ref{pro23} do not  work  for the $C^1$ weak solutions of  equation (\ref{EL-h}); however,  the same conclusion still holds, as has been discussed in \cite{KMS} without proof. 
We  give a proof by choosing suitable  test functions based on  change of variables. 

Let us first recall the change of variables  for general Sobolev functions; see, e.g.,  \cite {MM}. 
Let $p>n$ and $w\in W^{1,p}(\Omega;\R^n);$ then $w\in C_{loc}^\alpha(\Omega;\R^n)$ with $\alpha=1-\frac{n}{p}.$  Let $E$ be a measurable subset of $ \Omega.$ Denote by $N(w|E;y)$ the cardinality of the set $\{x\in E: w(x)=y\}.$ Then, for all measurable functions  $g\colon w(\Omega)\to \R,$ the  {\em change of variable formula}:
\begin{equation}\label{cov}
 \int_E g(w(x)) |\det Dw(x)|\, dx =\int_{w(E)} g(y)  N(w|E;y) \,dy
\end{equation}
is valid, whenever one of the two sides is meaningful    (see \cite[Theorem 2]{MM}).

  \begin{thm}\label{thm25}  Let $u\in C^1(\Omega;\R^n)$ be a weak solution of equation (\ref{EL-h}).  Then $h(\det Du)$ is constant on $\Omega.$  
 \end{thm}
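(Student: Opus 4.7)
The plan is to exploit the hint by constructing test functions through composition with the $C^1$ map $u$, reducing the weak equation to a simple distributional identity on the image via change of variables. Let $\Omega_0:=\{x\in\Omega:\det Du(x)\ne 0\}$, an open set on which the inverse function theorem applies. First I would show that $h(\det Du)$ is locally constant on $\Omega_0$; then conclude global constancy on $\Omega$ by a topological argument using continuity, connectedness, and the fact that $h(\det Du)=h(0)$ on the complement $\Omega\setminus\Omega_0$.

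For the local step, pick $x_0\in\Omega_0$ and (shrinking as needed) an open ball $B\Subset\Omega_0$ containing $x_0$ such that $u|_B\colon B\to V:=u(B)$ is a $C^1$ diffeomorphism on which $\det Du$ has constant sign $\epsilon\in\{\pm 1\}$. For any $\phi\in C_c^1(V;\R^n)$, extending $\phi\circ u|_B$ by zero produces $\zeta\in C_c^1(\Omega;\R^n)$ supported in $u^{-1}(\supp\phi)\Subset B$. The chain rule combined with the identity $(\cof A)A^T=(\det A)I$ yields the pointwise formula
\[
\cof Du(x):D\zeta(x)=\det Du(x)\,(\dv\phi)(u(x)),
\]
so substituting into (\ref{weak-1}) and applying the change of variables $y=u(x)$ on $B$ (where $|\det Du|$ cancels the Jacobian factor, leaving only the overall sign $\epsilon$) gives
\[
0=\epsilon\int_V g(y)\,\dv\phi(y)\,dy,\qquad g(y):=h(\det Du(u^{-1}(y))).
\]
As this holds for every $\phi\in C_c^1(V;\R^n)$, the continuous function $g$ has vanishing distributional gradient on the connected open set $V$, so $g$ is constant there, and pulling back, $h(\det Du)$ is constant on $B$.

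For the global step, $h(\det Du)$ is continuous on $\Omega$ (since $u\in C^1$) and identically equal to $h(0)$ on $E:=\Omega\setminus\Omega_0$. Being an open subset of $\R^n$, $\Omega_0$ has at most countably many connected components, and the local step shows $h(\det Du)$ takes at most countably many values on $\Omega_0$; together with the single value $h(0)$ on $E$, the image $h(\det Du)(\Omega)$ is countable. But as the continuous image of the connected domain $\Omega$, this image must be an interval in $\R$, and the only countable interval is a singleton, so $h(\det Du)$ is constant on $\Omega$.

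The main obstacle I foresee is executing the identity $\cof Du:D\zeta=\det Du\cdot(\dv\phi)\circ u$ cleanly while checking admissibility of the test function: the identity depends on the cancellation $(\cof Du)Du^T=(\det Du)I$, and the validity of $\zeta=\phi\circ u$ as a $C_c^1$ test function forces us to confine the argument to a ball $B$ where $u$ is a genuine $C^1$ diffeomorphism. The subsequent topological passage from locally constant on $\Omega_0$ to constant on $\Omega$, which must absorb the zero-set $E$ where the diffeomorphism approach fails, is the other delicate point, but the observation that a countable continuous image of a connected subset of $\R$ must be a single point disposes of it cleanly.
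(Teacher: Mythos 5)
Your proof is correct and its core mechanism --- composing with $u$ to build test functions $\zeta=\phi\circ u$, using $(\cof Du)(Du)^T=(\det Du)I$ to reduce $\cof Du:D\zeta$ to $\det Du\,(\dv\phi)\circ u$, and then changing variables to obtain $\int_V g\,\dv\phi\,dy=0$ --- is the same as the paper's, so the local-constancy step matches exactly. Where you differ is in the passage from local constancy on $\Omega_0=\{\det Du\ne0\}$ to global constancy on $\Omega$. The paper fixes a component $C$ of $\Omega_0$, notes $h(\det Du)$ is constant on $\bar C$ by continuity, and then argues that either $E:=\Omega\setminus\Omega_0$ is empty (so $C=\Omega$) or $\bar C\cap E\ne\emptyset$ (since $C$ is a maximal connected subset of $\Omega_0$ and $\Omega$ is connected), forcing the constant value to be $h(0)$ on every component. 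You instead observe that $\Omega_0$, being open in $\R^n$, has at most countably many components, so $h(\det Du)$ takes at most countably many values on $\Omega_0$ and exactly one value $h(0)$ on $E$; since a continuous function on the connected set $\Omega$ has an interval as image and a countable interval is a point, constancy follows. Your version treats $E=\emptyset$ and $E\ne\emptyset$ uniformly and avoids the (slightly hidden) topological fact that $\bar C$ must meet $E$; the paper's version is more direct and in addition identifies the constant as $h(0)$ whenever $E\ne\emptyset$. Both are valid and of comparable length; the key identity, the change-of-variables trick, and the admissibility of the composed test function are handled correctly in your write-up.
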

 \begin{proof}\  Let $E=\{x\in\Omega:\det Du(x)=0\},$ which is relatively closed in $\Omega.$ There is nothing to prove if $E=\Omega;$ thus we assume $\Omega_0=\Omega\setminus E\ne \emptyset.$ Let $C$ be a component  of the open set $\Omega_0.$  Without loss of generality, we assume $\det Du>0$ on $C.$ 
 Let $x_0\in  C$ and $y_0=u(x_0).$ Since $y_0$ is a regular value of $u$, by the inverse function theorem, there exists an open disk $D=B_\epsilon(y_0)$ such that  \[
 \begin{cases} \mbox{$U=u^{-1}(D)$ is a subdomain of $C;$}\\
 \mbox{$\det Du>0$ on $U;$}\\
 \mbox{$u:U\to D$ is bijective with inverse $v=u^{-1}:D\to U$;}\\
 \mbox{$v\in C^1(D;\R^n).$} \end{cases}
 \] 
 Given any $\phi\in C^1_c(D;\R^n),$ the function $\zeta(x)=\phi(u(x))\in C^1_c(U;\R^n)$ is a test function for (\ref{weak-1})  with $D\zeta(x)=D\phi(u(x))Du(x);$ thus, by the change of variables,   we obtain that
    \[
 \begin{split} 0&=\int_\Omega h(\det Du)\cof Du : D\zeta  \,dx\\
 &=\int_U h(\det Du(x))\cof Du(x) : D\phi(u(x))Du(x) \, dx \\
 &=\int_U h(\det Du(x))\det Du(x) \tr(D\phi(u(x))) \, dx \\
  &  =\int_D h(\det Du(v(y))\dv \phi(y) \,dy. \end{split}
  \]
This holding  for all $\phi\in C^1_c(D;\R^n)$  proves that $h(\det Du(v(y))$ is constant on $D;$ hence $h(\det Du)$ is constant on $U.$  Since $C$ is connected and $h$ is continuous, it follows that $h(\det Du)$ is constant on the relative closure $\bar C$ of $C$ in $\Omega.$ If $E=\emptyset,$ we  have $C=\Omega;$ hence,  $h(\det Du)$ is constant on $\Omega.$ If $E\ne \emptyset,$  we have  $\bar C\cap E\ne \emptyset$ and  thus $h(\det Du)=h(0)$ on $\bar C,$  which proves that $h(\det Du) =h(0)$ on $\Omega_0;$  hence  $h(\det Du) =h(0)$ on the whole $\Omega.$   
  \end{proof}

\begin{cor}\label{cor26}  Let  $u$ be a weak solution of equation (\ref{EL-h}).   Then  $\det Du$ is constant almost everywhere on $\Omega$  if one of the following assumptions holds:
\begin{itemize} 
 \item[(a)]  $u \in C^1(\Omega;\R^n)$ and $h$ is not constant on any intervals. 
  \item[(b)] $u\in W^{1,p}_{loc}(\Omega;\R^n)$, $h(\det Du)\in W^{1,\bar p}_{loc}(\Omega)$, where $n-1\le p\le \infty$ and $\bar p=\frac{p}{p+1-n}$ are given numbers, and $h $ is one-to-one. 
 \end{itemize}
  \end{cor}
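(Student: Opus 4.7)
The plan is to combine the conclusions already obtained in Theorem \ref{thm25} and Proposition \ref{pro24} with the extra hypothesis on $h$, reducing matters to elementary facts about continuous/one-to-one real functions.

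For part (a), I would start from Theorem \ref{thm25}, which tells us that $h(\det Du)$ is constant on all of $\Omega$. Call this constant $c$. Since $u\in C^1(\Omega;\R^n)$, the function $d(x)=\det Du(x)$ is continuous on $\Omega$, and $\Omega$ is a connected open set (a domain). Assume for contradiction that $d$ is not constant; then there exist $x_1,x_2\in\Omega$ with $d(x_1)\ne d(x_2)$, say $d(x_1)<d(x_2)$. Joining $x_1$ and $x_2$ by a path in $\Omega$ and applying the intermediate value theorem yields $[d(x_1),d(x_2)]\subset d(\Omega)$, so $h\equiv c$ on the nondegenerate interval $[d(x_1),d(x_2)]$, contradicting the assumption that $h$ is not constant on any interval. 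Hence $\det Du$ is constant on $\Omega$ (in fact everywhere, not merely a.e.).

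For part (b), I would appeal directly to Proposition \ref{pro24} to conclude that $h(\det Du)$ equals some constant $c$ almost everywhere on $\Omega$. Because $h$ is one-to-one, the equation $h(t)=c$ admits at most one real solution $t=d$, and this $d$ must lie in the essential range of $\det Du$. Then $\det Du(x)=d$ for a.e.\ $x\in\Omega$, which is the desired conclusion.

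The arguments are short and I expect no real obstacle: the whole technical content has been absorbed into Theorem \ref{thm25} (via the change-of-variables/inverse function theorem argument) and into Proposition \ref{pro24} (via Lemma \ref{lem-0}). The only mild subtlety is the use of connectedness of $\Omega$ in part (a) to promote ``$h(\det Du)$ constant'' to ``$\det Du$ has image an interval,'' which is necessary to invoke the ``not constant on any interval'' hypothesis on $h$.
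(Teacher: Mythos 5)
Your proposal is correct and follows essentially the same route as the paper: part (a) invokes Theorem \ref{thm25} and then uses continuity of $\det Du$ together with connectedness of $\Omega$ (the paper says $h^{-1}(c)$ contains no intervals while you phrase it contrapositively via the intermediate value theorem, but the content is identical), and part (b) is a direct application of Proposition \ref{pro24} plus injectivity of $h$, exactly as in the paper.
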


 \begin{proof}\  Assuming (a),   by Theorem \ref{thm25}, we have that $h(\det Du)=c$ is constant in $\Omega$ and thus $\det Du(x)\in h^{-1}(c).$   Since $\Omega$ is connected, $\det Du$ is continuous in $\Omega$  and $h^{-1}(c)$ contains no intervals,  it follows that $\det Du$ is constant in $\Omega.$ Assuming (b), by Proposition \ref{pro24}, we have that  $h(\det Du)$ is constant a.e.\,on $\Omega.$ Since $h$ is one-to-one, it follows that  $\det Du$ is constant a.e.\,on $\Omega.$
\end{proof}

  The following result  applies to the weak solutions $u$ of (\ref{EL-h}) such that $u\in C^1(\Omega;\R^n)$ or $u\in W^{1,p}_{loc}(\Omega;\R^n)$ with $h(\det Du)\in W^{1,\bar p}_{loc}(\Omega)$ for some $p\ge n$ and $\bar p=\frac{p}{p+1-n}.$

  \begin{pro}\label{pro26} Assume $h$ is nondecreasing and let $f(t)=\int_0^t h(s)\,ds.$   Suppose that  $u\in W^{1,n}_{loc}(\Omega;\R^n)$ is  a weak solution of (\ref{EL-h}) such that $h(\det Du)$ is constant a.e.\,on $\Omega.$ Then  for all subdomains $G\subset\subset\Omega,$ the inequality
  \[
  \int_G f(\det Du(x))\,dx \le \int_G f(\det Dv(x))\,dx
  \]
  holds for all $v\in W^{1,n}(G;\R^n)$ satisfying $v-u\in W^{1,n}_0(G;\R^n).$
  \end{pro}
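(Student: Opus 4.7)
The plan is to exploit two facts: (i) $f$ is convex because its derivative $h$ is nondecreasing, and (ii) the determinant is a \emph{null Lagrangian}, so $\int_G \det Dv\,dx$ depends only on the boundary trace of $v$.

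First I would note that since $h=f'$ is nondecreasing, $f$ is convex, and therefore the pointwise inequality
\[
f(\det Dv(x)) \ge f(\det Du(x)) + h(\det Du(x))\bigl(\det Dv(x) - \det Du(x)\bigr)
\]
holds for a.e.\ $x\in G$. Without loss of generality one may assume the right-hand side of the claimed inequality is finite, else the conclusion is trivial. Integrating the convexity inequality over $G$ then gives
\[
\int_G f(\det Dv)\,dx - \int_G f(\det Du)\,dx \ge \int_G h(\det Du)\bigl(\det Dv-\det Du\bigr)\,dx.
\]

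Next, by hypothesis $h(\det Du)=c$ a.e.\ on $\Omega$ for some constant $c\in\R$, so the right-hand side reduces to $c\int_G (\det Dv-\det Du)\,dx$. At this point I would invoke the standard null Lagrangian property of the determinant: for $u,v\in W^{1,n}(G;\R^n)$ with $v-u\in W^{1,n}_0(G;\R^n)$, one has
\[
\int_G \det Dv\,dx = \int_G \det Du\,dx.
\]
This is the classical weak-continuity/null-Lagrangian identity, which follows from the divergence structure $\det Dw = \tfrac{1}{n}\dv((\cof Dw)^T w)$ combined with the integrability $\cof Dw\in L^{n/(n-1)}$ for $w\in W^{1,n}$ and the zero boundary trace of $v-u$; alternatively it is a direct consequence of \eqref{weak-0} applied carefully. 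Either way, this immediately forces
\[
\int_G h(\det Du)(\det Dv-\det Du)\,dx = c\cdot 0 = 0,
\]
and combining with the integrated convexity inequality yields the conclusion.

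The only real subtleties are bookkeeping issues: ensuring the convexity inequality is integrable (handled by assuming the RHS is finite and noting that $h(\det Du)(\det Dv-\det Du) = c(\det Dv-\det Du)\in L^1(G)$ since $\det Du,\det Dv\in L^1(G)$ for $W^{1,n}$ maps), and correctly invoking the null-Lagrangian identity in the $W^{1,n}$ setting. Neither is a genuine obstacle — the null-Lagrangian equality for $W^{1,n}$ maps with matching trace is standard — so the proof is essentially a one-line convexity argument built on top of the well-known invariance of $\int_G\det Dw$ under boundary-preserving perturbations.
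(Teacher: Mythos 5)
Your proof is correct and follows essentially the same approach as the paper: apply the supporting-hyperplane inequality coming from convexity of $f$, use that $h(\det Du)$ is constant, and invoke the null-Lagrangian identity $\int_G\det Dv=\int_G\det Du$ for $W^{1,n}$ maps with matching trace. The extra remarks on integrability are reasonable but not emphasized in the paper's (very brief) proof.
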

  \begin{proof}\  Let $h(\det Du)=\mu$ be a constant a.e.\,on $\Omega.$  Since $f'=h$ is nondecreasing, it follows that $f$ is convex and thus  $f(t)\ge f(t_0)+h(t_0)(t-t_0)$ for all $t,\, t_0\in\R.$  Let $v\in W^{1,n}(G;\R^n)$ satisfy $v-u\in W^{1,n}_0(G;\R^n).$ Then
  \[
  f(\det Dv)\ge f(\det Du) +\mu (\det Dv-\det Du) \quad \mbox{a.e.\,on $\Omega.$}
  \]    Integrating over $G,$ since   $\int_G  \det Dv=\int_G \det Du$, we have
  \[
   \int_G f(\det Dv ) \ge \int_G f(\det Du )  +\mu \int_G (\det Dv-\det Du) =\int_G f(\det Du).
   \]
  \end{proof}

\subsection{Weak solutions in $W^{1,p}(\Omega;\R^n)$ for $p\ge n$} 
It remains open that whether weak solutions $u\in W^{1,n}_{loc}(\Omega;\R^n)$ of equation (\ref{EL-h}) must have a constant $h(\det Du)$ a.e.\,on $\Omega$ without assuming $h(\det Du)\in W^{1,n}_{loc}(\Omega).$ 
In fact, despite the example of very weak solutions in Example \ref{ex1} below, we do not know whether a weak solution  $u\in W^{1,n}_{loc}(\Omega;\R^n)$ of   (\ref{EL-h}) satisfying $h(\det Du)\in L^q_{loc}(\Omega)$ for some $q>1$ will have  a constant $h(\det Du)$ a.e.\,on $\Omega.$  
 However, we have some partial results below.

We first prove a result about the {\em homeomorphisms} in $W^{1,p}(\Omega;\R^n)$ for $p>n;$ such a result may have already been well known.

\begin{lem}\label{lem28} Let  $p>n$ and $u\in W^{1,p}(\Omega;\R^n)$ be such that $u$ is one-to-one on $ \Omega$ and $u(\Omega)$ is a domain in $\R^n.$  Then, either $\det Du\ge 0$ a.e.\,on $\Omega$ or $\det Du\le 0$ a.e.\,on $\Omega.$
\end{lem}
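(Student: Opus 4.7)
Since $p>n$, the Sobolev embedding gives a continuous representative of $u$, and combined with injectivity of $u$ and Brouwer's invariance of domain theorem, $u\colon\Omega\to u(\Omega)$ is a homeomorphism between connected open sets in $\R^n$. The plan is to assign a global orientation $\epsilon\in\{+1,-1\}$ to this homeomorphism via degree theory, then compare the unsigned change of variables formula (\ref{cov}) with its signed companion to force $\det Du$ to have sign $\epsilon$ almost everywhere on $\Omega$.

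For a homeomorphism between domains in $\R^n$, the local index $i(u,x)=\deg(u,U,u(x))$ (computed on any sufficiently small open neighborhood $U$ of $x$ with $u^{-1}(u(x))\cap\overline{U}=\{x\}$) takes values in $\{+1,-1\}$ and is locally constant in $x$. Connectedness of $\Omega$ then yields a single $\epsilon\in\{+1,-1\}$ equal to $i(u,x)$ for every $x\in\Omega$, and consequently $\deg(u,B,y)=\epsilon$ for $y\in u(B)$ and $\deg(u,B,y)=0$ for $y\notin u(\overline B)$, for every ball $B\subset\subset\Omega$. Fix such a $B$. Applying (\ref{cov}) with $E=B$ and $g=\chi_{u(B)}$, together with $N(u|B;y)=\chi_{u(B)}(y)$ coming from the injectivity of $u$, gives
$$\int_B|\det Du(x)|\,dx=|u(B)|.$$
On the other hand, the signed area/degree formula, valid for Sobolev maps in $W^{1,p}$ with $p>n$, reads
$$\int_B g(u(x))\det Du(x)\,dx=\int_{\R^n}g(y)\deg(u,B,y)\,dy,\qquad g\in C_c(\R^n\setminus u(\partial B)).$$
Taking $g\uparrow\chi_{u(B)}$ (legitimate since $|u(\partial B)|=0$ by Lusin's N-property for $p>n$) gives $\int_B\det Du\,dx=\epsilon\,|u(B)|=\epsilon\int_B|\det Du|\,dx$, so
$$\int_B\bigl(|\det Du|-\epsilon\det Du\bigr)\,dx=0.$$
Pointwise nonnegativity of the integrand forces $\det Du$ to have sign $\epsilon$ a.e.\ on $B$, and exhausting $\Omega$ by a countable family of such balls delivers the stated dichotomy.

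The main technical obstacle is invoking the signed area/degree formula at $W^{1,p}$ regularity. This is a classical companion to the unsigned formula used by the paper via \cite{MM}, typically established by smooth approximation combined with the uniform convergence afforded by Morrey's embedding when $p>n$, which stabilizes the Brouwer degree under the approximation. With that ingredient and the global constancy of the local index in hand, the remainder is essentially a one-line comparison of the two integral identities on a single ball.
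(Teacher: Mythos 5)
Your argument is correct, and it takes a genuinely different route from the paper's. You invoke Brouwer's invariance of domain to upgrade the injective continuous Sobolev map to a homeomorphism, then use the local degree/index (globally constant $\epsilon\in\{\pm1\}$ by connectedness of $\Omega$), and finally compare the unsigned change-of-variables formula (\ref{cov}) against the signed area/degree formula
$\int_B g(u)\det Du\,dx=\int g(y)\deg(u,B,y)\,dy$ for $p>n$. The paper instead bypasses degree theory entirely: it partitions $\Omega$ into $\Omega_0,\Omega_+,\Omega_-$ by the sign of $\det Du$, plugs the test function $\zeta=\phi\circ u$ into the weak Piola identity (\ref{weak-0}) (the null-Lagrangian property $\dv(\cof Du)=0$ in $W^{1,n-1}_{loc}$), and then pushes forward through (\ref{cov}) to deduce that $\int_{u(\Omega_+)}\dv\phi\,dy=0$ for every $\phi\in C^1_c(u(\Omega);\R^n)$; this means $\chi_{u(\Omega_+)}$ has vanishing distributional gradient on $u(\Omega)$, hence is a.e.\ constant, which delivers the dichotomy. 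Your version buys a cleaner conceptual picture (the sign literally \emph{is} the topological orientation of $u$), but at the price of needing the signed degree formula for $W^{1,p}$ maps and the stability of degree under uniform and Sobolev approximation, which is a heavier topological ingredient. The paper's version replaces that with the lighter and more self-contained Piola identity, which is already used elsewhere in the paper and requires only $W^{1,n-1}_{loc}$ regularity; in effect, the paper computes the constancy of the orientation without ever naming the degree. Both arguments implicitly rely on invariance of domain to ensure $\phi\circ u$ has compact support, and both use the unsigned formula from \cite{MM}, so the essential divergence is signed-degree-formula versus Piola-identity.
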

\begin{proof} Since $u$ is one-to-one on $\Omega,$ we have $ N(u|\Omega;y) =1$ for all $y\in u(\Omega).$ 
Let  $\Omega_0=\{x\in\Omega:  \det Du(x)=0\}$, $\Omega_+=\{x\in\Omega:  \det Du(x)>0\}$ and $\Omega_-=\{x\in\Omega:  \det Du(x)<0\}.$ Then 
\[
|\Omega_0|+|\Omega_+|+|\Omega_-|=|\Omega|, \quad |u(\Omega)|=|u(\Omega_0)|+|u(\Omega_+)|+|u(\Omega_-)|.
\]
  On the other hand, by the change of variable formula (\ref{cov}), we have $|u(\Omega_0)|=\int_{\Omega_0} |\det Du|=0.$ Let $\phi\in C^1_c(u(\Omega);\R^n)$ be  given and define $\zeta(x)=\phi(u(x)) \in W^{1,p}_0(\Omega;\R^n).$ Then
\[
 0=\int_{u(\Omega)} \dv\phi(y)\,dy =\int_{u(\Omega_+)} \dv\phi(y)\,dy+ \int_{u(\Omega_-)} \dv\phi(y)\,dy;  
 \]
moreover, since $ \cof Du : D\zeta = (\dv \phi)(u(x)) \det Du =0$ on $\Omega_0,$ we have
\[
\begin{split} 0&=\int_\Omega  \cof Du : D\zeta \,dx  \\
&= \int_{\Omega_0} \cof Du : D\zeta\,dx+  \int_{\Omega_+}  \cof Du : D\zeta\,dx  + \int_{\Omega_-}  \cof Du : D\zeta\,dx  \\
&=   \int_{\Omega_+} (\dv \phi)(u(x)) \det Du\,dx    + \int_{\Omega_-}  (\dv \phi)(u(x)) \det Du\,dx   \\
&=\int_{u(\Omega_+)} \dv \phi(y) \,dy -\int_{u(\Omega_-)} \dv \phi(y) \,dy.
\end{split}
\]
Thus,    it follows that $\int_{u(\Omega_+)} \dv \phi(y) \,dy =\int_{u(\Omega_-)} \dv \phi(y) \,dy =0;$  in particular, 
 \[
 \int_{u(\Omega)}\chi_{u(\Omega_+)}(y)\dv \phi  \,dy=\int_{u(\Omega_+)} \dv \phi \,dy =0\quad\forall\; \phi\in C^1_c(u(\Omega);\R^n).
 \]
  This proves that $\chi_{u(\Omega_+)}$ is constant a.e.\,on $u(\Omega);$  thus  either  $|u(\Omega_+)|=0$ or $|u(\Omega_+)|=|u(\Omega)|.$ If  $|u(\Omega_+)|=0,$ then  $\int_{\Omega_+} |\det Du| \,dx= |u(\Omega_+)| =0;$ since $\det Du>0$ on $\Omega_+,$ this shows that $ |\Omega_+|=0,$ and thus $\det Du\le 0$ a.e.\,on $\Omega.$ If $|u(\Omega_+)|=|u(\Omega)|,$ then $|u(\Omega_-)|=0;$  by a similar proof, this shows  that
  $|\Omega_-|=0,$  and thus $\det Du\ge 0$ a.e.\,on $\Omega.$  This completes the proof. 
  \end{proof}
  
We now have the following result concerning the certain homeomorphism  weak solutions of equation (\ref{EL-h}).  
 
\begin{thm}\label{thm-homeo} Assume $h$ is nondecreasing. Let  $p>n$, $\tilde p=\frac{p}{p-n},$ and let $u\in W^{1,p}(\Omega;\R^n)$ be a weak solution of (\ref{EL-h}) with $h(\det Du)\in L^{\tilde p}(\Omega)$ such that
\begin{equation}\label{inv}
\begin{cases} \mbox{$u$ is one-to-one on $ \Omega,$}\\
\mbox{$ u(\Omega)$ is a domain in $\R^n,$}\\
\mbox{$v =u^{-1}\in C(u(\Omega);\R^n)$ satisfies the {\em Luzin $N$-property:}} \\
\mbox{$|v(B)|=0$ for all $B\subset u(\Omega)$ with $|B|=0.$}
\end{cases}
\end{equation} Then $h(\det Du)$ is a  constant a.e.\,on $\Omega.$
\end{thm}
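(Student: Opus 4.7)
The plan is to push the weak equation forward to the image $u(\Omega)$ by testing against $\zeta=\phi\circ u$ for $\phi\in C_c^1(u(\Omega);\R^n)$, derive a distributional identity for $h(\det Du\circ v)$ on the connected open set $u(\Omega)$, conclude it is a.e.\,constant there, and then pull this information back to $\Omega$ using the Luzin $N$-property of $v=u^{-1}$.

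First, by Lemma~\ref{lem28} we may assume without loss of generality that $\det Du\ge 0$ a.e.\,on $\Omega$. Since $Du\in L^p(\Omega)$ we have $\cof Du\in L^{p/(n-1)}(\Omega)$; combining with $h(\det Du)\in L^{\tilde p}(\Omega)$, H\"older's inequality gives $h(\det Du)\cof Du\in L^{p'}(\Omega)$ where $p'=p/(p-1)$, so (\ref{weak-1}) is valid for all test functions in $W^{1,p}$ with compact support in $\Omega$. For $\phi\in C_c^1(u(\Omega);\R^n)$, the composition $\zeta=\phi\circ u$ is such a test function: $u$ is continuous because $p>n$, $\supp\zeta\subset v(\supp\phi)$ is compact in $\Omega$ by continuity of $v$, and $D\zeta=D\phi(u)\,Du\in L^p$.

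Using the identity $Du(\cof Du)^T=(\det Du)I$ and the cyclic invariance of the trace,
\[
\cof Du:D\phi(u)Du=\tr\bigl(Du(\cof Du)^T D\phi(u)\bigr)=(\det Du)(\dv\phi)(u).
\]
Inserting into (\ref{weak-1}) and using $\det Du\ge 0$ a.e., the change of variable formula (\ref{cov}) (with $N(u|\Omega;y)=1$) transforms the integral over $\Omega$ into one over $u(\Omega)$, yielding
\[
\int_{u(\Omega)} h(\det Du(v(y)))\,\dv\phi(y)\,dy=0 \qquad\forall\,\phi\in C_c^1(u(\Omega);\R^n).
\]
Applying (\ref{cov}) a second time gives $\int_{u(\Omega)}|h(\det Du\circ v)|\,dy=\int_\Omega |h(\det Du)||\det Du|\,dx$, which is finite by H\"older since $h(\det Du)\in L^{\tilde p}$, $\det Du\in L^{p/n}$, and $(p-n)/p+n/p=1$. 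Hence $h(\det Du\circ v)\in L^1(u(\Omega))$ is a distribution whose gradient vanishes on the connected domain $u(\Omega)$; therefore there is $c\in\R$ with $h(\det Du(v(y)))=c$ for a.e.\,$y\in u(\Omega)$.

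Finally, the exceptional set in $\Omega$ where $h(\det Du)$ differs from $c$ is contained in the union of the null set where $\det Du$ is undefined and $v(E)$, where $E\subset u(\Omega)$ has measure zero; by the assumed Luzin $N$-property of $v$, $|v(E)|=0$, so $h(\det Du)=c$ a.e.\,on $\Omega$. The heart of the argument, and its main obstacle, is this pullback step, which is precisely why the unusual hypothesis on $v$ in (\ref{inv}) is imposed: without it, the a.e.\,constancy of $h(\det Du\circ v)$ on the image would not transfer to $\Omega$. The exact choice $\tilde p=p/(p-n)$ is also critical, as it is exactly the integrability of $h(\det Du)$ needed both for $\phi\circ u$ to be an admissible test function and for $h(\det Du\circ v)$ to lie in $L^1(u(\Omega))$. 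The hypothesis that $h$ is nondecreasing does not appear to be needed for the constancy conclusion itself; it ensures consistency with the energy-minimization setting of Proposition~\ref{pro26}.
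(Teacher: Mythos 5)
Your proof is correct, and it takes a genuinely cleaner route than the paper's.  The paper first normalizes $h(0)=0$, then uses monotonicity of $h$ to write $h(\det Du)\det Du=|h(\det Du)||\det Du|$, pushes forward by the change of variables to conclude that $|h(\det Du\circ v)|$ is constant on $u(\Omega)$, pulls that back via the Luzin $N$-property to get $|h(\det Du)|=\lambda$ a.e.\ on $\Omega$, and only then invokes Lemma~\ref{lem28} together with monotonicity of $h$ to rule out the possibility that $h(\det Du)$ takes both values $\pm\lambda$ on sets of positive measure.  You instead invoke Lemma~\ref{lem28} at the outset to fix the sign of $\det Du$.  This lets the change-of-variables step transport $h(\det Du\circ v)$ itself (rather than its absolute value), so the constancy of $h(\det Du\circ v)$ on $u(\Omega)$ and hence of $h(\det Du)$ on $\Omega$ falls out directly, with no need to control sign.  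Your remaining steps --- admissibility of $\zeta=\phi\circ u$ as a $W^{1,p}_0$ test function (via $h(\det Du)\cof Du\in L^{p'}$ by H\"older with the exponent $\tilde p=p/(p-n)$), the trace identity $\cof Du:D\phi(u)Du=(\det Du)(\dv\phi)(u)$, the $L^1$ integrability of $h(\det Du\circ v)$ on $u(\Omega)$, and the pullback via the Luzin $N$-property --- are all handled correctly.  Your observation that the monotonicity of $h$ is not actually used in this reorganized argument is correct and is a nontrivial improvement: in the paper's proof monotonicity is genuinely used (to infer the sign of $\det Du$ from the sign of $h(\det Du)$ on $E_\pm$), whereas by moving Lemma~\ref{lem28} to the front you bypass that step entirely, and the remaining hypotheses (integrability and the homeomorphism/Luzin assumptions) suffice.
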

 \begin{proof}\  Without loss of generality, we assume $h(0)=0;$  thus $h(t)t=|h(t)t|\ge 0$ for all $t\in \R.$ 
From the assumption, we have 
\[
h(\det Du)\cof Du\in  L^{p'}(\Omega;\M^{n\times n});\quad p'= {p}/(p-1).
\]
 Let $\phi\in C^1_c(u(\Omega);\R^n).$ Then $\zeta(x)=\phi(u(x))\in W^{1,p}_0(\Omega;\R^n)$ is  a legitimate test function for (\ref{weak-1}). Since $ N(u|\Omega;y) =1$ for all $y\in u(\Omega),$ by the change of variable formula (\ref{cov}),  we obtain  
    \[
 \begin{split} 0&=\int_\Omega h(\det Du)\cof Du : D\zeta  \,dx\\
 &=\int_\Omega h(\det Du(x))\cof Du(x) : D\phi(u(x))Du(x) \, dx \\
 &=\int_\Omega \tr(D\phi(u(x)))  h(\det Du(x))\det Du(x) \, dx \\ 
 &=\int_\Omega  (\dv \phi)(u(x))  |h(\det Du(v(u(x)))|  |\det Du(x) |\, dx \\
 &  =\int_{u(\Omega)} \dv \phi(y) |h(\det Du(v(y))| N(u|\Omega;y)\,dy\\
 &  =\int_{u(\Omega)} \dv \phi(y) |h(\det Du(v(y))| \,dy. \end{split}
  \]
This holding for all $\phi\in C^1_c(u(\Omega);\R^n)$  proves that  $|h(\det Du(v(y))|$ is constant a.e.\,on $u(\Omega).$ Since $|v(B)|=0$ for all null sets  $B\subset u(\Omega)$, we have that $|h(\det Du)|=\lambda$ is a constant a.e.\,on $\Omega.$ If $\lambda=0$ then $h(\det Du)=0$ a.e.\,on $\Omega.$ Now assume $\lambda>0$ and let  
\[
E_+=\{x\in\Omega: h(\det Du(x))=\lambda\},\quad E_-=\{x\in\Omega: h(\det Du(x))=-\lambda\}.
\]
Then $|E_+|+|E_-|=|\Omega|.$  Since $\lambda>0$ and $h$ is nondecreasing,  we  have  $\det Du> 0$ a.e.\,on $E_+$ and $\det Du<0$ a.e.\,on $E_-.$ Finally, by Lemma \ref{lem28}, we conclude  that either $|E_+|=0$ or $|E_-|=0,$ which  proves the theorem.  
\end{proof}

\begin{remk}  A sufficient condition  for the invertibility of Sobolev functions has been given in \cite{Ba2}. For example, suppose that $\Omega$ is a bounded Lipschitz domain and  $u_0\in C(\bar\Omega;\R^n)$ is such that $u_0$ is one-to-one on $\bar\Omega$ and $u_0(\Omega)$ satisfies the cone condition. Then,  condition (\ref{inv}) is satisfied for a function $u\in W^{1,p}(\Omega;\R^n)$ with $p>n$ provided that
\[
\begin{cases} u|_{\partial\Omega}=u_0,\\
\mbox{$\det Du >0$  a.e.\,on $\Omega,$}\\
\int_\Omega |\cof Du|^q(\det Du)^{1-q} \,dx<\infty \;\; \mbox{for some $q>n$.} 
\end{cases}
\]
See  \cite{Kr} for recent studies and more references  in this direction.
\end{remk}

The following result concerns the weak solutions with certain linear Dirichlet boundary conditions.

\begin{pro}  Let $h(0)=0$,  $p\ge n$, $\tilde p=\frac{p}{p-n},$ and $u\in W^{1,p}(\Omega;\R^n)$ with $h(\det Du)\in L^{\tilde p}(\Omega)$ be a weak solution of (\ref{EL-h}) satisfying  the Dirichlet boundary condition $u|_{\partial\Omega}=Ax,$ where $A\in \M^{n\times n}$ is given. Let 
\[
\lambda=\int_\Omega h(\det Du)\det Du\,dx, \quad B=\int_\Omega h(\det Du) \cof Du\,dx.
\]
 Then  $BA^T=\lambda I.$  Moreover, if $\det A=0$ and $h$ is one-to-one, then $\det Du=0$ a.e.\,on $\Omega,$ and thus $B=0.$  
\end{pro}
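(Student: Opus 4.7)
The plan is to derive the matrix identity $BA^T=\lambda I$ by testing the weak equation against a carefully chosen family of functions built from $u-Ax$, and then obtain the second part by scalar algebra.

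First, I would verify admissibility of the test functions. Since $u\in W^{1,p}(\Omega;\R^n)$ one has $\cof Du\in L^{p/(n-1)}(\Omega;\MM)$, and combined with $h(\det Du)\in L^{\tilde p}(\Omega)$, H\"older's inequality with exponents $\frac{n-1}{p}+\frac{p-n}{p}=\frac{p-1}{p}=\frac{1}{p'}$ gives $h(\det Du)\cof Du\in L^{p'}(\Omega;\MM)$. Hence, by the remark after (\ref{weak-1}), the weak formulation holds for every $\zeta\in W^{1,p}_0(\Omega;\R^n)$. The boundary condition $u|_{\partial\Omega}=Ax$ means $u-Ax\in W^{1,p}_0(\Omega;\R^n)$, so functions of the form
\[
\zeta_{ij}(x)=\bigl(u^j(x)-A^j_k x_k\bigr)\,e_i,\qquad i,j=1,\dots,n,
\]
are legitimate test functions (here $e_i$ is the $i$-th standard basis vector).

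Next, I would compute $\cof Du:D\zeta_{ij}$. Only the $i$-th row of $D\zeta_{ij}$ is nonzero, and equals the row vector $(Du)^j-A^j$ (the $j$-th row of $A$). Using the fundamental cofactor identity $\sum_l (\cof Du)^i_l (Du)^j_l=(\det Du)\delta^i_j$, the inner product collapses to
\[
\cof Du:D\zeta_{ij}=(\det Du)\,\delta^i_j-\sum_l (\cof Du)^i_l\,A^j_l.
\]
Inserting this into (\ref{weak-1}) and recalling the definitions of $\lambda$ and $B$ immediately yields $\lambda\,\delta^i_j-(BA^T)^i_j=0$ for every $i,j$, which is precisely $BA^T=\lambda I$.

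For the second statement, taking determinants in $BA^T=\lambda I$ gives $\lambda^n=\det(B)\det(A)=0$ when $\det A=0$, so $\lambda=0$. Since $h$ is continuous, one-to-one, and satisfies $h(0)=0$, it is strictly monotone, so the product $h(t)t$ has constant sign on $\R$ and vanishes only at $t=0$. Hence $\lambda=\int_\Omega h(\det Du)\det Du\,dx=0$ forces $h(\det Du)\det Du=0$ a.e., so $\det Du=0$ a.e.\ on $\Omega$; then $h(\det Du)=h(0)=0$ a.e., giving $B=0$.

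The only non-routine point is recognizing that the right object to test against is not $u-Ax$ itself (which produces only the scalar identity $n\lambda=\tr(BA^T)$), but the $n^2$ separate scalar test functions $\zeta_{ij}$ that read off the matrix identity entry-by-entry; once this is observed, each step is a direct consequence of the cofactor identity and basic determinant algebra.
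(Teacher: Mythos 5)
Your proof is correct and takes essentially the same route as the paper: the paper tests against $\zeta = P(u - Ax)$ for arbitrary $P \in \M^{n\times n}$ and uses $\cof Du : PDu = (\tr P)\det Du$, while your $\zeta_{ij}$ is precisely the special case $P = e_i \otimes e_j$, so by linearity the two formulations are equivalent. Your elaboration of the second part (deducing $\lambda = 0$ from $\det A = 0$ via the determinant of $BA^T = \lambda I$, and then using that $t h(t)$ has constant sign and vanishes only at $t = 0$ for a continuous one-to-one $h$ with $h(0)=0$) correctly fills in the paper's terse final sentence.
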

 \begin{proof}\  For $P\in \M^{n\times n},$ the function  $\zeta(x)=P(u(x)-Ax)\in W^{1,p}_0(\Omega;\R^n)$ is a legitimate test function for (\ref{weak-1}), which, from $D\zeta =PDu-PA$ and $\cof Du: PDu =( \tr P )\det Du,$ yields   that
\[
(\tr P) \int_\Omega h(\det Du)\det Du \,dx= \int_\Omega h(\det Du)\cof Du:PA\,dx.
\]
This is simply  $(\lambda I-BA^T):P=0.$ Since $P\in \M^{n\times n}$ is arbitrary, we have $BA^T=\lambda I.$  If $\det A=0,$ then $\lambda=0.$ Furthermore, if $h$ is one-to-one, then $\lambda=0$ implies $\det Du=0$ a.e.\,on $\Omega$ and thus $B=0.$ 
 \end{proof}

\begin{remk} (i) Assume $h(0)=0$ and $h$ is one-to-one. If  $\det A\ne 0$, then  $B=\mu h(\det A)\cof A,$ where $\mu=\frac{\lambda}{h(\det A)\det A}>0.$ It remains open whether $\mu=1.$ Note that if $\mu\ne 1$ then $\det Du$ cannot be a constant a.e.\,on $\Omega.$

(ii) There are many (very) weak solutions $u\in W^{1,p}(\Omega;\R^n)$ of equation (\ref{EL-h}) satisfying  $u|_{\partial\Omega}=Ax$  for some $p<n$ such that $\det Du$ is not constant a.e.\,on $\Omega.$ See Example \ref{ex1} below.
\end{remk} 

  \section{Radially symmetric solutions} \label{sec-radial}
  
  Let $B=B_1(0)$ be the open unit ball in $\R^n.$ We consider the radially symmetric or {\em radial} functions 
  \begin{equation}\label{radial}
 u(x)= \phi(|x|) x,
    \end{equation}
  where $\phi\colon (0,1)\to\R$ is weakly differentiable. With $r=|x|$ and $ \omega=\frac{x}{|x|}$,
 we have
  \begin{equation}\label{det}
  \begin{cases}     Du(x) =\phi(r) I + r\phi'(r)\, \omega\otimes \omega, \\
    \det Du(x) = \phi(r)^n +r\phi'(r) \phi(r)^{n-1}, \\
     \cof Du(x) = \alpha(r)I+\beta(r) \, \omega\otimes \omega, \end{cases}
    \end{equation}
 for a.e.\,$x\in B,$ where $\alpha(r)=\phi^{n-1}+r\phi^{n-2}\phi'$ and $\beta(r)=-r\phi^{n-2}\phi'.$ 
  
  \subsection{Some properties of radial functions} We study some properties of radial functions pertaining to the  equation (\ref{EL-h}).  
    
    \begin{lem}\label{S-mean}
   Let $p\ge 1$ and $v\in L^p_{loc}(B\setminus \{0\}).$ Define $\tilde v=M(v)\colon (0,1)\to \R$ by setting
    \begin{equation}\label{psi}
\tilde v(r)  =M(v)(r)= \aint_{S_r}v(x) \,d\sigma_r=\frac{1}{\omega_n} \int_{S_1}v(r\omega) \,d\sigma_1,
     \end{equation}
 where $S_r=\partial B_r(0)$, $d\sigma_r=d\mathcal H^{n-1}$ denotes the $(n-1)$-Hausdorff measure on $S_r,$ and $\omega_n=\mathcal H^{n-1}(S_1).$   Then $\tilde v\in L^p_{loc}(0,1).$ Furthermore, if $v\in W^{1,p}_{loc}(B\setminus \{0\}),$ then $\tilde v\in W^{1,p}_{loc}(0,1)$ with
 \[
 \tilde v'(r)= \frac{1}{\omega_n} \int_{S_1}Dv(r\omega)\cdot \omega \,d\sigma_1 =r^{-1}M(Dv\cdot x)(r) \quad a.e.\; r\in (0,1).
 \]
Since $W^{1,p}_{loc}(0,1)\subset  C(0,1)$, $\tilde v$ can be identified as  a continuous function in  $(0,1)$ if $v\in W^{1,p}_{loc}(B\setminus \{0\}).$
 \end{lem}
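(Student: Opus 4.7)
The plan is to verify the $L^p$ bound by Jensen's inequality together with Fubini, then establish the derivative formula first in the smooth case by differentiation under the integral sign, and finally pass to the limit through a $C^\infty$-approximation in the Sobolev case.

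For the first assertion, I fix $0 < a < b < 1$. By Jensen's inequality applied to the convex function $t \mapsto |t|^p$ and the probability measure $d\sigma_1/\omega_n$ on $S_1$,
\[
|\tilde v(r)|^p \le \frac{1}{\omega_n}\int_{S_1}|v(r\omega)|^p \, d\sigma_1.
\]
Integrating over $[a,b]$ and converting to polar coordinates bounds $\int_a^b |\tilde v(r)|^p \, dr$ by $a^{-(n-1)}/\omega_n$ times $\int_{\{a \le |x| \le b\}} |v(x)|^p \, dx$, which is finite because $v \in L^p_{loc}(B\setminus\{0\})$. Applied later to the differences $v - v_k$ and $(Dv - Dv_k)\cdot x$ of mollifications, this same estimate will give $L^p_{loc}(0,1)$-continuity of the $M$ operation, which is the key tool for the limiting argument.

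For the second assertion, the smooth case is direct: if $v \in C^1(B\setminus\{0\})$, differentiation under the integral sign yields
\[
\tilde v'(r) = \frac{1}{\omega_n}\int_{S_1} Dv(r\omega)\cdot \omega\, d\sigma_1,
\]
and the substitution $x = r\omega$ (so that $Dv(r\omega)\cdot\omega = r^{-1}Dv(x)\cdot x$) identifies this with $r^{-1} M(Dv \cdot x)(r)$. For general $v \in W^{1,p}_{loc}(B\setminus\{0\})$, I take mollifications $v_k \in C^\infty(B\setminus\{0\})$ with $v_k \to v$ in $W^{1,p}$ on every closed spherical shell contained in $B\setminus\{0\}$. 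The first part of the lemma then supplies $\tilde v_k \to \tilde v$ and $M(Dv_k\cdot x) \to M(Dv\cdot x)$ in $L^p_{loc}(0,1)$. The smooth case provides, for each test function $\zeta \in C_c^1(0,1)$, the identity
\[
\int_0^1 \tilde v_k(r)\, \zeta'(r)\, dr = -\int_0^1 r^{-1} M(Dv_k\cdot x)(r)\, \zeta(r)\, dr,
\]
and, since $\zeta' \in L^{p'}$ and $r^{-1}\zeta$ is bounded with compact support in $(0,1)$, letting $k \to \infty$ transfers the identity to $v$. This characterises $r^{-1} M(Dv\cdot x)(r) \in L^p_{loc}(0,1)$ as the weak derivative of $\tilde v$, placing $\tilde v$ in $W^{1,p}_{loc}(0,1)$. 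The continuous identification follows from the standard 1-D Sobolev embedding.

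The lemma is essentially an exercise in Jensen's inequality, Fubini and standard mollification on shells, so I do not expect a genuine obstacle. The point demanding the most care is that convergence need only hold on intervals $[a,b] \Subset (0,1)$, which correspond to shells bounded away from the origin where the weight $r^{-1}$ is harmless and $C^\infty$-approximation in $W^{1,p}$ is routinely available; once one restricts to such shells, the bookkeeping is straightforward.
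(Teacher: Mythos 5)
Your proof is correct, and it takes a genuinely different route from the paper's. The paper proves the weak-derivative formula directly: for $\eta\in C^\infty_c(a,b)$ it converts $\int_a^b\tilde v\,\eta'\,dr$ to the shell integral $\frac{1}{\omega_n}\int_{a<|x|<b}|x|^{1-n}\eta'(|x|)v\,dx$, observes that $|x|^{1-n}\eta'(|x|)v = D(\eta(|x|))\cdot(v|x|^{-n}x)$, integrates by parts against the weak derivative of $v$, and uses $\dv(|x|^{-n}x)=0$ in $\R^n\setminus\{0\}$ to peel off the term $Dv\cdot(|x|^{-n}x)$; no mollification is needed. You instead prove the classical derivative formula for $C^1$ functions by differentiation under the integral sign, deduce the weak identity for smooth $v_k$, and then pass to the limit using the $L^p_{loc}$-boundedness of $M$ (your first part) applied to $v-v_k$ and $(Dv-Dv_k)\cdot x$ on shells away from the origin. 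What the paper's route buys is economy: one short chain of equalities and no approximation sequence, no diagonalization over shells, no separate treatment of the smooth case. What your route buys is modularity and familiarity: it reduces the lemma to the smooth case and to continuity of a linear operator, a pattern that transfers readily to other spherical-average facts, and it sidesteps having to recognize the divergence-free vector field $|x|^{-n}x$, which is the small trick at the heart of the paper's computation. Both arguments are sound; the only thing you should spell out explicitly if you write this up is that a single sequence $v_k$ converging in $W^{1,p}$ on every compact shell can be produced by exhausting $B\setminus\{0\}$ by shells and diagonalizing, or by invoking the Meyers--Serrin density theorem on the open set $B\setminus\{0\}$.
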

    
    \begin{proof}\  Note that 
    \[
     |\tilde v(r)| \le \aint_{S_r}|v| \,d\sigma_r \le \left ( \aint_{S_r}|v|^p \,d\sigma_r\right)^{1/p}.
     \]
    Thus, for all $0<a<b<1$,
    \[
    \int_a^b \omega_nr^{n-1}|\tilde v|^p\,dr\le \int_a^b \left ( \int_{S_r}|v|^p \,d\sigma_r\right)dr=\int_{a<|x|<b} |v(x)|^p\,dx<\infty.
    \]
    This proves $\tilde v\in L^p_{loc}(0,1).$  Now assume $v\in W^{1,p}_{loc}(B\setminus \{0\})$ and let
    \[
    g(r)=  \frac{1}{\omega_n} \int_{S_1}Dv(r\omega)\cdot \omega \,d\sigma_1 =r^{-1}M(Dv\cdot x)(r) \in L^p_{loc}(0,1).
    \]
    Let $0<a<b<1$ and $\eta\in C^\infty_c(a,b).$ Then
    \[
    \int_a^b \tilde v(r)\eta'(r)\,dr=\frac{1}{\omega_n} \int_a^b r^{1-n}\eta'(r) \Big( \int_{S_r} v \,d\sigma_r\Big) dr \]
    \[
   =\frac{1}{\omega_n} \int_{a<|x|<b} |x|^{1-n}\eta'(|x|)v(x)\,dx=\frac{1}{\omega_n} \int_{a<|x|<b} D(\eta(|x|))\cdot (v|x|^{-n}x)\,dx
   \]
   \[
  =-\frac{1}{\omega_n} \int_{a<|x|<b} \eta(|x|) \dv(v|x|^{-n}x)\,dx=-\frac{1}{\omega_n} \int_{a<|x|<b} \eta(|x|) Dv\cdot (|x|^{-n}x)\,dx\]
  \[
=  -\frac{1}{\omega_n} \int_a^b \eta(r) r^{-n} \Big (\int_{S_r} Dv(x)\cdot xd\sigma_r\Big )\,dr = -\int_a^b g(r)\eta(r)\,dr.
    \]
 This proves $g=\tilde v'.$   
     \end{proof}
 
  \begin{pro}\label{pro29} Assume $\phi\in W_{loc}^{1,1}(0,1)$ and $u(x)=\phi(|x|)x.$ Let $p\ge 1$ and $p'=\frac{p}{p-1}.$ Suppose that  $q(r)$ is a measurable function on $(0,1)$ such that  
  \begin{equation}\label{integ}
  q(|x|)\cof Du\in L^p_{loc}(B\setminus \{0\}).
  \end{equation}
Then, for all $0<a<b<1$ and $v\in W_{loc}^{1,p'}(B;\R^n),$ it follows that 
 \begin{equation}\label{id-1}
 \int_{a<|x|<b} q(|x|)\cof Du:Dv  \,dx = \omega_n \int_a^b q(r)(r^{n-2}\phi^{n-1}\psi)' \,dr,  
  \end{equation}
  where $\psi(r)=M(v\cdot x)(r)=\frac{r}{\omega_n} \int_{S_1}v(r\omega)\cdot \omega \,d\sigma_1.$
 \end{pro}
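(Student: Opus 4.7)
The plan is to reduce the $n$-dimensional integral on the left to a one-dimensional radial integral by spherical integration (Fubini) together with the explicit radial structure of $\cof Du$ from (\ref{det}). First, since $q(|x|)\cof Du\in L^p_{loc}(B\setminus\{0\})$ and $Dv\in L^{p'}_{loc}(B)$, H\"older gives $q(|x|)\cof Du:Dv\in L^1_{loc}(B\setminus\{0\})$, so writing $dx=r^{n-1}d\sigma_1\,dr$ yields
\[
\int_{a<|x|<b} q(|x|)\cof Du:Dv\,dx \;=\; \int_a^b q(r)\,G(r)\,dr,\qquad G(r):=\int_{S_r}\cof Du:Dv\,d\sigma_r.
\]
It therefore suffices to establish the pointwise identity $G(r)=\omega_n(r^{n-2}\phi^{n-1}\psi)'(r)$ for a.e.\,$r\in(a,b)$.

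For the pointwise identity I would use (\ref{det}) to write $\cof Du=\alpha(r)I+\beta(r)\omega\otimes\omega$ with $\alpha=\phi^{n-1}+r\phi^{n-2}\phi'$ and $\beta=-r\phi^{n-2}\phi'$. Since $\alpha,\beta$ depend only on $|x|$,
\[
G(r)\;=\;\alpha(r)\int_{S_r}\dv v\,d\sigma_r\;+\;\beta(r)\int_{S_r}\omega^T Dv\,\omega\,d\sigma_r.
\]
The first integral is evaluated by the divergence theorem on $B_r$: $\int_{B_r}\dv v\,dx=\int_{S_r}v\cdot\omega\,d\sigma_r=\omega_n r^{n-2}\psi(r)$, and differentiating in $r$ (valid a.e.\,by Fubini) gives $\int_{S_r}\dv v\,d\sigma_r=\omega_n(r^{n-2}\psi)'(r)$. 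For the second, I apply Lemma \ref{S-mean} to the scalar function $v\cdot x\in W^{1,p'}_{loc}(B)$, whose spherical mean is $\psi(r)$. Using $\nabla(v\cdot x)=(Dv)^T x+v$, hence $\nabla(v\cdot x)\cdot\omega=r\,\omega^T Dv\,\omega+v\cdot\omega$, Lemma \ref{S-mean} gives
\[
\omega_n\psi'(r)\;=\;r\int_{S_1}\omega^T Dv(r\omega)\omega\,d\sigma_1\;+\;\omega_n\psi(r)/r,
\]
which rearranges (after the $r^{n-1}$ Jacobian) to $\int_{S_r}\omega^T Dv\,\omega\,d\sigma_r=\omega_n r^{n-1}(\psi/r)'(r)$.

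Substituting these two formulas into $G(r)$ and expanding via the product rule (using $(r^{n-2})'=(n-2)r^{n-3}$ and the formal $(\phi^{n-1})'=(n-1)\phi^{n-2}\phi'$) shows that
\[
\alpha(r)\,(r^{n-2}\psi)'+\beta(r)\,r^{n-1}(\psi/r)'\;=\;(r^{n-2}\phi^{n-1}\psi)',
\]
where the right-hand side is interpreted as the expanded expression $(n-2)r^{n-3}\phi^{n-1}\psi+(n-1)r^{n-2}\phi^{n-2}\phi'\psi+r^{n-2}\phi^{n-1}\psi'$, which is a measurable a.e.\,defined function under the hypotheses. The main delicate point is that, with only $\phi\in W^{1,1}_{loc}(0,1)$, one cannot freely invoke a Sobolev chain rule for $\phi^{n-1}$; the identity is used and proved at the pointwise a.e.\,level, and its integrability against $q(r)$ comes for free from finiteness of the left-hand side of (\ref{id-1}), giving the claimed identity.
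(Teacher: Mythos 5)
Your proof is correct and follows essentially the same route as the paper: Fubini to reduce to a radial integral, the decomposition $\cof Du:Dv=\alpha\,\dv v+\beta\,Dv:(\omega\otimes\omega)$, the divergence theorem for the first spherical integral, and Lemma~\ref{S-mean} applied to $v\cdot x$ for the second, with your $\omega_n r^{n-1}(\psi/r)'$ agreeing with the paper's $\omega_n(r^{n-2}\psi)'-(n-1)\omega_n r^{n-3}\psi$. Your closing caveat about the chain rule for $\phi^{n-1}$ is actually unnecessary: $\phi\in W^{1,1}_{loc}(0,1)$ is locally absolutely continuous, the map $t\mapsto t^{n-1}$ is $C^1$, so $\phi^{n-1}$ is also locally absolutely continuous and $(\phi^{n-1})'=(n-1)\phi^{n-2}\phi'$ a.e.\ is a genuine (not merely formal) chain-rule identity.
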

 
 \begin{proof}\    Note that $q(|x|)\cof Du:Dv\in L^1_{loc}(B\setminus \{0\}).$ Since $ \cof Du: Dv =\alpha(r)\dv v + \beta(r) Dv:(\omega\otimes \omega),$   we have
 \begin{equation}\label{id-2}
\begin{split}
\int_{a<|x|<b} q(|x|)\cof Du:Dv  \,dx  &=\int_a^b \Big( \int_{S_r} q(|x|)\cof Du:Dv \,d\sigma_r\Big )dr \\
= \int_a^b q(r) \Big  (\alpha(r) \int_{S_r} \dv v\,d \sigma_r  &+ \beta(r) \int_{S_r}Dv:(\omega\otimes \omega) \,d\sigma_r \Big ) dr.
\end{split}
\end{equation}
We now compute the two spherical integrals. First, for all $a<t<1,$ by the divergence theorem,
 \[
 \int_a^t  \int_{S_r} \dv v\,d\sigma_r \,dr =\int_{a<|x|<t}\dv v\,dx\]
 \[
 =\int_{S_t} v\cdot \frac{x}{t}\,d\sigma_t-\int_{S_a} v\cdot \frac{x}{a}\,d\sigma_a =\omega_n t^{n-2}\psi(t)-\omega_n a^{n-2}\psi(a).
 \]
Hence
 \[
  \int_{S_r} \dv v\,d\sigma_r = \omega_n (r^{n-2}\psi(r))' \quad a.e. \; r\in (0,1). 
  \]
Second, since $D(v\cdot x) \cdot x =Dv : (x\otimes x) +v\cdot x,$ we have
\[
\psi'(r)=   r^{-1} M(D(v\cdot x) \cdot x)= r^{-1} M(Dv : (x\otimes x)) + r^{-1} M(v\cdot x).\]
Thus $M(Dv : (x\otimes x)) =r \psi' - \psi$ and hence
\[
 \int_{S_r}Dv:(\omega\otimes \omega) \,d\sigma_r =\omega_n r^{n-3}M(Dv : (x\otimes x))  =\omega_n r^{n-2} \psi' -\omega_n r^{n-3}\psi 
 \]
 \[
  = \omega_n (r^{n-2}\psi)' -(n-1)\omega_n r^{n-3}\psi.
 \]
Since $\alpha+\beta=\phi^{n-1}$ and $\beta=-r\phi^{n-2}\phi'$, elementary computations lead to 
\[
\alpha(r) \int_{S_r} \dv v\,d\sigma_r+ \beta(r) \int_{S_r}Dv:(\omega\otimes \omega) \,d\sigma_r
\]
\[
= \omega_n \alpha (r^{n-2}\psi)' +  \omega_n \beta  [(r^{n-2}\psi)' -(n-1) r^{n-3}\psi]=\omega_n (\phi^{n-1}r^{n-2}\psi)'
\]
for a.e.\,$r\in (0,1).$ Finally, (\ref{id-1}) follows from (\ref{id-2}). 
 \end{proof}

 \begin{thm} Assume $h\in C(\R)$   is one-to-one, $p\ge \frac{n}{n-1}$ and $\phi \in W^{1,p}_{loc}(0,1).$  Let $u(x)=\phi(|x|)x$ be a weak solution of (\ref{EL-h}) such that
 \begin{equation}\label{dual-p}
 h(\det Du)\cof Du\in L_{loc}^\frac{p}{p-1}(B;\M^{n\times n}).
 \end{equation}   
 Then either $\phi\equiv 0,$ or 
 \[
 \phi(r)=\left(\lambda + \frac{c}{r^n}\right)^{1/n}\ne 0\quad\forall\; 0<r<1,
 \]
 where $\lambda$ and $c$ are constants.  (When $n$ is even, we need $\lambda + \frac{c}{r^n}> 0$ in $(0,1)$ and  there are two nonzero branches  of the $n$-th roots.)
   \end{thm}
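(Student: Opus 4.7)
The plan is to reduce equation (\ref{EL-h}) to a one-dimensional ODE for $\phi$ via Proposition \ref{pro29}, and then to use the Sobolev regularity $\phi\in W^{1,p}_{loc}(0,1)$ with $p\ge n/(n-1)$ to rule out intermediate zeros of $\phi$.

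Because $u$ is radial, $\det Du$ depends only on $r=|x|$ and satisfies $\det Du = \phi^n+r\phi'\phi^{n-1} = \tfrac{1}{nr^{n-1}}(r^n\phi^n)'$. In particular $H(r):=h(\det Du)$ is a function of $r$ alone, and hypothesis (\ref{dual-p}) says $H(|x|)\cof Du\in L^{p/(p-1)}_{loc}(B;\MM)$. Applying Proposition \ref{pro29} with $q(r)=H(r)$ (noting that the Hölder-conjugate exponent in the proposition matches $p$), and using (\ref{weak-1}), one obtains, for every $0<a<b<1$ and every admissible $v\in W^{1,p}_{loc}(B;\R^n)$,
\[
\int_a^b H(r)\bigl(r^{n-2}\phi(r)^{n-1}\psi(r)\bigr)'\,dr=0, \qquad \psi(r)=M(v\cdot x)(r).
\]

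The next step is to show this identity actually holds for an arbitrary $\psi\in W^{1,p}_c((a,b))$. Given such $\psi$, I set $v(x)=\psi(|x|)x/|x|^2$; since $\psi$ is supported in $(a,b)\subset\subset(0,1)$, the factor $x/|x|^2$ is smooth and bounded on $\supp(v)$, so $v\in W^{1,p}(B;\R^n)$ with compact support, and a direct spherical-mean computation yields $M(v\cdot x)=\psi$. Suppose now $I\subset(0,1)$ is a subinterval on which $\phi$ is nowhere zero. Then $\Phi(r):=r^{n-2}\phi(r)^{n-1}$ is continuous, lies in $W^{1,p}_{loc}(I)$, and is bounded away from zero on every $I'\subset\subset I$. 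For any $g\in C^\infty_c(I')$, the function $\psi:=g/\Phi$ is in $W^{1,p}_c(I')$, and substitution gives $\int H(r)g'(r)\,dr=0$ for all such $g$, so $H$ is constant on $I$. Since $h$ is one-to-one, $\det Du\equiv\lambda$ on $I$, and integrating $(r^n\phi^n)'=n\lambda r^{n-1}$ produces $\phi(r)^n=\lambda+c/r^n$ on $I$ for some constants $\lambda,c$.

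It remains to establish the dichotomy: either $\phi\equiv 0$ on $(0,1)$, or $\phi$ never vanishes there. This is the main obstacle, and it is exactly where the threshold $p\ge n/(n-1)$ is used. Suppose, for contradiction, that a connected component $I$ of $\{\phi\ne 0\}$ has an interior endpoint $r_0\in(0,1)$. Since $\phi\in W^{1,p}_{loc}(0,1)\subset C(0,1)$ (as $p>1$), one has $\phi(r_0)=0$. Substituting into $\phi^n=\lambda+c/r^n$ forces $c=-\lambda r_0^n$ and $\lambda\neq 0$ (else $\phi\equiv 0$ on $I$). Then $\phi(r)^n\sim (n\lambda/r_0)(r-r_0)$ as $r\to r_0$ from within $I$, so $\phi(r)\sim C|r-r_0|^{1/n}$ and $|\phi'(r)|\sim C'|r-r_0|^{-(n-1)/n}$. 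But $|\phi'|^p$ fails to be integrable near $r_0$ precisely when $p(n-1)/n\ge 1$, i.e.\ when $p\ge n/(n-1)$, contradicting $\phi\in W^{1,p}_{loc}$. Hence no such interior endpoint exists, so the only possible component of $\{\phi\ne 0\}$ is all of $(0,1)$. Applying the previous step on $I=(0,1)$ then yields $\phi(r)=(\lambda+c/r^n)^{1/n}$ globally, with the branch chosen according to the sign of $\phi$ (and, when $n$ is even, with $\lambda+c/r^n>0$ on $(0,1)$).
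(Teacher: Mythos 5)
Your proof is correct and follows essentially the same approach as the paper: apply Proposition~\ref{pro29} to radial test functions of the form $\zeta(x)=\tfrac{\eta(r)}{r^n\phi(r)^{n-1}}x$ (your $v(x)=\psi(|x|)x/|x|^2$ with $\psi=g/\Phi$ is exactly this), deduce that $h(\det Du)$, hence $\det Du$, is constant on each component of $\{\phi\ne 0\}$, integrate the ODE to get $\phi^n=\lambda+c/r^n$, and then rule out interior endpoints via the blow-up $|\phi'|\sim|r-r_0|^{-(n-1)/n}\notin L^p_{loc}$ for $p\ge n/(n-1)$. The only difference is presentational — you spell out the asymptotic computation that the paper merely asserts when concluding $\phi\notin W^{1,q}_{loc}$ for $q\ge n/(n-1)$.
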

  
 \begin{proof}\   Let $S=\{r\in (0,1):\phi(r)\ne 0\};$ then $S$ is  open.   If $S=\emptyset,$ then $\phi\equiv 0.$ Assume $S$ is  nonempty.   Let $(a,b)$ be   a component of $S.$  Let $\eta(r)\in C_c^\infty(0,1)$ be any function with  compact support contained in $(a,b).$ Define the radial function
\[
 \zeta(x)=\begin{cases} \dfrac{\eta(r)  }{r^{n}\phi(r)^{n-1}}  \,x & \mbox{if $r=|x|\in (a,b),$}\\
 0 & \mbox{otherwise.}\end{cases}
 \]
Then $\zeta\in W^{1,p}(B;\R^n)$ with $\supp \zeta\subset \{a<|x|<b\}.$ Let $\psi=M(\zeta\cdot x);$ then $
r^{n-2}\phi^{n-1}\psi=\eta.
$
 Let  $ \det Du=\phi^n +r\phi' \phi^{n-1}=:d(r).$ By assumption (\ref{dual-p}),  $\zeta$ is a legitimate test function for equation (\ref{EL-h}); thus,  by (\ref{id-1}), we obtain that 
\[
0= \int_B h(\det Du)\cof Du: D\zeta\,dx \]
 \[
 =  \int_a^b h(d(r)) (r^{n-2}\phi^{n-1}\psi)'\, dr  =\int_a^b h(d(r))\eta'(r)dr.
\]
This holds  for all  $\eta\in C^\infty_c(a,b);$ thus $h(d(r))$ is constant  a.e.\,in $(a,b)$. As $h$ is one-to-one, we have that  $d(r)$ is constant a.e.\,in $(a,b).$    Assume $d(r) =\phi^n +r\phi' \phi^{n-1}= \lambda$ in $(a,b).$ Solving the differential equation  we obtain that
    \[
 \phi(r)\ne 0,\quad   \phi(r)=\left(\lambda + \frac{c}{r^n}\right)^{1/n} \quad (a<r<b).
   \]
If one of $a$ and $b$ is inside $(0,1),$ then $\phi=0$ at this point; but in this case, $\phi\notin W^{1,q}_{loc}(0,1)$ for any $q\ge \frac{n}{n-1}.$ So $(a,b)=(0,1);$ this completes the proof.
 \end{proof}

\subsection{Very weak solutions}   
We consider some examples of (very) weak solutions of (\ref{EL-h}) in $W^{1,p}(B;\R^n)$ with $p<\frac{n}{n-1}.$

\begin{example}\label{ex1} Let $n\ge 2$, $0<a\le b<1$ and $\lambda_1\ne \lambda_2.$  Let  $u=\phi(|x|)x$, where  
\begin{equation}\label{ex-1}
\phi (r)=\begin{cases}  [\lambda_1 (r^n-a^n)]^{1/n}/{r}  & (0< r \le a),\\ 
0 & (a< r \le b),\\ 
 [\lambda_2 (r^n-b^n)]^{1/n}/{r} & (b < r\le  1).
 \end{cases}
  \end{equation}
Then $u \in W^{1,p}(B;\R^n)$  for  all $1\le p< \frac{n}{n-1}$ and $u$  is a weak solution of equation (\ref{EL-h}) in $B$ satisfying the Dirichlet boundary condition 
\[
u|_{\partial B}=Ax=[\lambda_2 (1-b^n)]^{1/n}x,
\]
 but  $\det Du$ is not a constant on $B.$  
Moreover, if $\lambda_2=0$, then $u\chi_B$  is a weak solution of (\ref{EL-h}) on the whole $\R^n$; namely, 
\begin{equation}\label{r-2}
\int_B    h(\det Du)   \cof Du:D\zeta =0\quad\forall\,\zeta\in C^1(\R^n;\R^n).
\end{equation}
\end{example}

\begin{proof}\  If $\lambda_1=0$, then only $Du$ blows up at $r=|x|=b,$ with 
$
|Du(x)|\approx |\phi'(r)|\approx  |r-b|^{\frac{1}{n}-1}$ and $|\cof Du(x)|\approx |\phi(r)|^{n-2}|\phi'(r)|\approx |r-b|^{-\frac{1}{n}}$ near $r=b.$ Hence, in this case,  $u \in W^{1,p}(B;\R^n)$  for  $1\le p< \frac{n}{n-1}$  and $\cof Du\in L^q(B;\M^{n\times n})$ for all $1\le q<n.$ 

If $\lambda_1\ne 0$, then  $u$ and $Du$  also blow up at $x=0$ and $Du$ blows up at $r=a$ and $r=b.$ In this case, the similar blow-up estimates show  that $u \in W^{1,p}(B;\R^n)$  and $\cof Du\in L^p(B;\M^{n\times n})$  for  $1\le p< \frac{n}{n-1}.$  

Hence, in all cases, $u \in W^{1,p}(B;\R^n)$  and $h(\det Du) \cof Du \in L^p(B;\M^{n\times n})$ for  all $1\le p< \frac{n}{n-1}.$  
Given any $\zeta\in C^1(\R^n;\R^n),$ let $ \psi(t)= M(\zeta\cdot x)(t)=\frac{t}{\omega_n} \int_{S_1}\zeta (t\omega)\cdot \omega \,d\sigma_1.$ Then 
\[
 \lim_{t\to 0^+} (t^{-1}\psi(t))= \lim_{t\to 0^+}   \frac{1}{\omega_n} \int_{S_1}\zeta (t\omega)\cdot \omega \,d\sigma_1=0.
\]
By Proposition \ref{pro29}, we have 
\[
 \begin{split}    \int_B  & h(\det Du)   \cof Du:D\zeta  
 =\lim_{t\to 0^+ } \int_{t<|x|<1} h(\det Du) \cof Du:D\zeta \\
 &= h(\lambda_1) \lim_{t\to 0^+} \int_{t<|x|<a}   \cof Du:D\zeta 
   + h(\lambda_2)  \int_{b<|x|<1}  \cof Du:D\zeta   \\
 & =-\omega_n  h(\lambda_1) \lim_{t\to 0^+} t^{n-2}\phi(t)^{n-1}\psi(t) +\omega_n  h(\lambda_2)  (t^{n-2}\phi(t)^{n-1}\psi(t))|_b^1\\
 &=\omega_n  h(\lambda_2)  \phi(1)^{n-1}\psi(1).\end{split}
  \]
If $\zeta\in C^1_c(B;\R^n)$, then $\psi(1)=0$; this proves that $u$ is a weak solution of   (\ref{EL-h}). Moreover, if $\lambda_2=0$ then $\phi(1)=0;$ in this case, we obtain (\ref{r-2}).  
\end{proof}

\begin{example}\label{ex2} Let $\Omega$ be any bounded domain in $\R^n$,  $\{\bar B_{r_i}(c_i)\}_{i=1}^\infty$ be a family of disjoint closed balls in $\Omega,$ and
 $ u_i(x)= \phi_i(|x|)x$ be the radial function on $B=B_1(0),$ where $\phi_i(r)$  is defined by (\ref{ex-1}) with $\lambda_1=t_i\ne 0$, $a=a_i\in (0,1)$  and $\lambda_2=0.$  By choosing suitable $t_i$ and $a_i$, we assume
\[
\|u_i\|_{W^{1,p}(B)}+\|\cof Du_i\|_{L^p(B)} \le M\quad \forall\, i=1,2,\cdots 
\]
for some constants $M>0$ and $1\le p< \frac{n}{n-1}.$  Define
\[
u(x)=\begin{cases} r_i u_i(\frac{x-c_i}{r_i})  & x\in B_i=B_{r_i}(c_i),\\
0 & x\in \Omega\setminus \cup_{i=1}^\infty B_i.
\end{cases}
\]
Then $u \in W^{1,p}(\Omega;\R^n)$, $\cof Du\in L^p(\Omega;\M^{n\times n})$ and  
$
|\{x\in\Omega: \det Du(x)=t_i\}|\ge a_i^n |B_i|$ (with equality holding  if $t_i\ne t_j$ for all $i\ne j$). 

Moreover, $u$ is a weak solution of (\ref{EL-h}) on $\Omega;$ in fact, $u\chi_\Omega$ is  a weak solution of (\ref{EL-h}) on $\R^n.$ To see this, given any $\zeta\in C^1(\R^n;\R^n),$ we observe that
\[
 \begin{split}    \int_\Omega  & h(\det Du)   \cof Du:D\zeta  \,dx
 =\sum_{i=1}^\infty \int_{B_i} h(\det Du) \cof Du:D\zeta\,dx \\
 &=\sum_{i=1}^\infty r_i^n \int_{B} h(\det Du_i(z)) \cof Du_i(z):(D\zeta)(c_i+r_iz)\,dz  \\
&=\sum_{i=1}^\infty \int_{B} h(\det Du_i(z)) \cof Du_i(z):D\zeta_i(z)\,dz =0,
\end{split}
\]
where $\zeta_i(z) =r_i^{n-1}  \zeta(c_i+r_iz)\in C^1(\R^n;\R^n).$ By (\ref{r-2}), we have
\[
 \int_{B} h(\det Du_i(z)) \cof Du_i(z):D\zeta_i(z)\,dz=0\quad \forall\,i=1,2,\cdots.
 \]
 Hence $ \int_\Omega   h(\det Du)   \cof Du:D\zeta  \,dx=0$  for all $\zeta\in C^1(\R^n;\R^n).$
    \end{example}
 
\subsection{Non-quasimonotonicity}  Quasimonotonicity is an important condition related to the existence and regularity of weak solutions of certain systems of partial differential  equations; see  \cite{CZ92, Fu87,  Ha95, KM06,   La96,  Zh86}. 

\begin{defn} A function $\sigma \colon\M^{n\times n}\to \M^{n\times n}$ is said to be  {\em quasimonotone  at $A\in \M^{n\times n}$}  provided that
\[
\int_\Omega \sigma(A+ D\phi(x)):D\phi(x)\,dx \ge 0\quad \forall\;  \phi\in C_c^1(\Omega;\R^n).
\]
(This condition is independent of the domain $\Omega.$) 
\end{defn}
We have the following result, which  holds for the model case $h(t)=t;$ the result also holds for a more general class  of functions  including $h(t)=e^t$, but we do not intend to  dwell on the generality.

\begin{thm} Let $n\ge 2$ and $h\in C^1(\R)$ be  such that
\begin{equation}\label{growth1}
 \lambda  t^{k_1}  \le h'(t)\le \Lambda(t^{k_2}+1)\quad \forall \, t\ge 0,
 \end{equation}
  where $\Lambda>\lambda>0$,  $k_1$ and $k_2$ are constants  such  that    
 $
  0\le k_1\le k_2<k_1+1.$  Then $\sigma(A)=h(\det A)\cof A$ is not quasimonotone at $I\in \M^{n\times n}.$ 
\end{thm}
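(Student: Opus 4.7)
The plan is to exhibit an explicit $\phi\in C_c^1(\Omega;\R^n)$ for which
\[
\int_\Omega \sigma(I+D\phi):D\phi\,dx<0,
\]
directly violating quasimonotonicity of $\sigma$ at $I$. I take $\phi$ to be the gradient of a quadratic potential times a cutoff, exploiting the fact that the integrand becomes an explicit algebraic expression on the bulk where $D\phi$ is constant.

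\emph{Step 1 (test function).} Fix $\Omega=B_{2R}(0)$ for $R$ large, and set
\[
\phi=\nabla(\psi\chi_R),\qquad \psi(x)=\sum_{i=1}^n c_i x_i^2,
\]
where $c_i\in\R$ are to be chosen and $\chi_R\in C_c^\infty(\R^n)$ is a radial cutoff equal to $1$ on $B_R$, vanishing off $B_{2R}$, with $|D^k\chi_R|\le CR^{-k}$. Then $\phi\in C_c^1(\Omega;\R^n)$ and on the bulk $|x|<R$ the matrix $D\phi=2\,\mathrm{diag}(c_1,\dots,c_n)$ is constant.

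\emph{Step 2 (bulk value is negative).} On the bulk the integrand is the constant
\[
\sigma(I+D\phi):D\phi = h\!\Bigl(\prod_{i}(1+2c_i)\Bigr)\,\sum_{j}2c_j\prod_{i\neq j}(1+2c_i).
\]
Without loss of generality $h(0)=0$; then $h\ge 0$ on $[0,\infty)$ by $h'\ge\lambda t^{k_1}\ge 0$. Choosing $c_1=\beta>0$ and $c_2=\cdots=c_n=-\alpha$ with $\alpha\in(\tfrac{1}{2n},\tfrac12)$ yields $\prod_i(1+2c_i)=(1+2\beta)(1-2\alpha)^{n-1}>0$, so the $h$-factor is strictly positive; a short algebraic check gives
\[
\sum_j 2c_j\prod_{i\neq j}(1+2c_i)=2(1-2\alpha)^{n-2}\bigl[\beta(1-2\alpha)-(n-1)\alpha(1+2\beta)\bigr]<0.
\]
Hence the bulk integrand equals a strictly negative constant $N<0$, so the bulk contribution is $N|B_R|<0$.

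\emph{Step 3 (transition estimate).} On the annulus $R<|x|<2R$, the product-rule decomposition
$D^2(\psi\chi_R)=\chi_R D^2\psi+D\psi\otimes D\chi_R+D\chi_R\otimes D\psi+\psi D^2\chi_R$
together with $|D^k\psi|=O(R^{2-k})$ and $|D^k\chi_R|=O(R^{-k})$ makes every term $O(1)$, so $|D\phi|\le C$ \emph{uniformly in $R$}. The upper growth bound $h'\le\Lambda(t^{k_2}+1)$ then yields a constant $M=M(c,h,\Lambda,k_2)$ bounding $|\sigma(I+D\phi):D\phi|$ on the transition.

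\emph{Step 4 (conclusion).} The total integral obeys
\[
\int_\Omega\sigma(I+D\phi):D\phi\,dx\le N|B_R|+M\bigl(|B_{2R}|-|B_R|\bigr).
\]
Since the bulk/transition volume ratio is a fixed constant $2^n-1$ independent of $R$, negativity reduces to the inequality $|N|>M(2^n-1)$, which is arranged by (i) fine-tuning the admissible $(\alpha,\beta)$ to maximize $|N|$ and (ii) localizing the transition to a thinner shell $R<|x|<(1+\eta)R$ while keeping the derivative bounds controlled via the growth exponent $k_2<k_1+1$. With this choice the integral is strictly negative.

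\emph{Main obstacle.} The crux is Step 4: making the bulk negativity dominate the transition term. The condition $k_2<k_1+1$ is exactly what prevents $\sigma$ from blowing up in a way that would swamp the bulk gain as the cutoff is sharpened; without it, the transition could outweigh the bulk for every cutoff. Verifying the quantitative inequality $|N|>M(2^n-1)$ in terms of $\lambda,\Lambda,k_1,k_2,n,$ and the chosen $c_i$ is the main technical point.
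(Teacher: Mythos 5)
Your construction cannot close the gap you correctly identify in Step~4, and the way you invoke the exponent condition $k_2<k_1+1$ does not correspond to anything in the argument.

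With the fixed-thickness annulus $R<|x|<2R$, the bulk/transition volume ratio is the fixed number $1/(2^n-1)$, and your bound on the transition integrand is a fixed constant $M$ once $c_i$ and $h$ are chosen. Tuning $(\alpha,\beta)$ does not help: the same parameters that scale $|N|$ also scale the $D\phi$-bound (hence $M$) on the annulus, and one checks that as $\beta\to\infty$ the transition bound grows like $\beta^{n(k_2+2)}$ while $|N|$ grows only like $\beta^{k_2+2}$; taking $\beta\to 0$ or $\alpha\to 1/2$ sends $N\to 0$. Thinning the shell to $R<|x|<(1+\eta)R$ makes matters worse: since $\int_\Omega D\phi\,dx=0$, the transition must carry $\int D\phi \approx -C|B_R|$, forcing $|D\phi|\gtrsim \eta^{-1}$ there (and in your product-rule computation the $\psi D^2\chi$ term is $O(\eta^{-2})$). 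Then $\det(I+D\phi)$ is of size $\eta^{-O(n)}$, so $|\sigma(I+D\phi):D\phi|$ is a large negative power of $\eta$, and when multiplied by the shell volume $\eta R^n$ the transition contribution still diverges as $\eta\to 0$ while the bulk contribution stays bounded. The sign of the divergent transition term is not controlled. Nowhere in this scaling is the hypothesis $k_2<k_1+1$ used in a decisive way; with a bounded bulk and an unbounded or fixed-ratio transition, the gap between $k_1$ and $k_2$ simply does not enter.

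The paper's proof reverses the roles: the dominant \emph{negative} contribution is deliberately placed in a thin layer. It uses the radial test field $\phi=\rho_\epsilon(|x|)x$, with $\rho_\epsilon$ piecewise linear, steep (slope $\sim 1/\epsilon$) on a layer of width $\epsilon$, over which $1+\rho$ ranges from $0$ up to $1-1/n$, so that $\det(I+D\phi)$ sweeps through small values. Splitting the integrand as $P=EBC+EBD+h(A)C+h(A)D$ with $E=\int_0^1h'(A+tB)\,dt$, the paper shows the negative piece $\int EBD$ is $\le -M_7\epsilon^{-k_1-1}$ using the lower bound $h'\ge\lambda t^{k_1}$, while the other pieces are $O(\epsilon^{-k_2})$ using the upper bound $h'\le\Lambda(t^{k_2}+1)$. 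The condition $k_2<k_1+1$ then forces $\int_0^1 P_\epsilon\to-\infty$ as $\epsilon\to 0$. This is the step your Step~4 would need and does not provide; without putting the negativity into the thin, high-gradient layer where $\det(I+D\phi)$ is small, there is no mechanism for the exponent gap to act, and I do not see how your plan can be repaired while retaining a constant $D\phi$ in the bulk.

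Your Steps 1--3 are internally consistent (the sign computation in Step~2 is correct, and the uniform bound $|D\phi|=O(1)$ on the $R$-to-$2R$ annulus is right), and the reduction to $h(0)=0$ is legitimate since adding a constant to $h$ changes $\sigma$ by a multiple of $\cof A$, a null Lagrangian. But Step~4 is where the proof has to happen, and as written it is a description of the obstacle rather than a way past it.
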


 \begin{proof}\  Let $\Omega$  be the  unit ball in $\R^n.$ We show that there exists a {\em radial} function 
$
 \phi(x)=\rho(|x|)x,
$
  where $ \rho\in W^{1,\infty}(0,1)$ with $ \rho(1)=0,$ such that 
  \begin{equation}\label{qm}
  \int_\Omega \sigma(I+ D\phi(x)):D\phi(x)\,dx<0.
  \end{equation}
Using the spherical coordinates, we compute that
\[
 \int_\Omega \sigma(I+ D\phi(x)):D\phi(x)\,dx =\omega_n \int_0^1 P(r)\,dr,
 \]
   where 
 \[
 P(r)=h\Big ((1+ \rho)^{n} + (1+ \rho)^{n-1} \rho' r\Big )  \Big ( n\rho (1+\rho)^{n-1} +(1+n\rho)(1+\rho)^{n-2} \rho' r\Big)r^{n-1}
 \]
 \[
 =h(A+B)(C+D),
 \]
 with $ A =(1+\rho)^n,\; B =(1+\rho)^{n-1}\rho' r,\;  C =n\rho (1+\rho)^{n-1}r^{n-1},$ and $
 D =(1+n\rho)(1+\rho)^{n-2}\rho'r^n.$ We write  $h(A+B)=h(A+B)-h(A)+h(A)=EB+h(A),$ where
\[
E=  \int_0^1h'(A +tB )dt.
\]
Thus $P=EBC+EBD+h(A)C+h(A)D.$ 

  Let $0<a<1$ be fixed and $b=a-\epsilon$ with $0<\epsilon<a$ sufficiently small. Define
   \begin{equation}\label{rho}
 \rho=\rho_\epsilon(r)=\begin{cases} -1, & 0\le r\le b,\\
 \frac{n-1}{n\epsilon }(r-a)-\frac{1}{n}, & b\le r \le a,\\
 \frac{1}{n(1-a)} (r-1), &a\le r\le 1.
 \end{cases}
 \end{equation}
(See  Figure \ref{fig1b}.)   Then, with $P(r)=P_\epsilon(r)$, we  have
\begin{equation}\label{m1} 
\int_0^1 P_\epsilon (r)\,dr=\int_b^a P(r)\,dr+\int_a^1 P(r)\,dr,\quad 
\Big |\int_a^1 P(r)\,dr \Big |\le M_1,
\end{equation}
where $M_1$ (likewise, each of the $M_k$'s  below) is a positive constant independent of $\epsilon.$  
     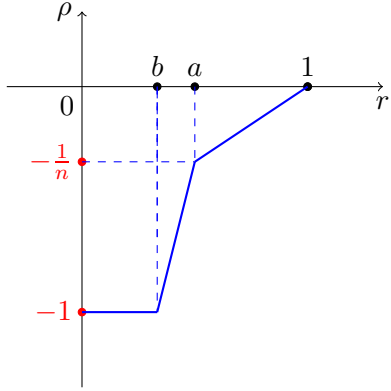
\begin{figure}[ht]     
 \begin{tikzpicture}  	
   \draw[->]  (-1,0)--(4,0) node[below]{$r$};
			\draw[->](0,-4)--(0,1) node[left]{$\rho$};
			  
			     \draw[fill] (1.5,0)  circle (0.05); 
			  			  			     			      \draw[fill] (3,0)  circle (0.05); 
			    
			        \draw[fill] (1,0)  circle (0.05); 
			         \draw[] (3,0) node[above]{$1$};    \draw[] (1.5,0) node[above]{$a$};    \draw[] (1,0) node[above]{$b$};
			         \draw[fill] (3,0)  circle (0.05);

			             \draw[red] (0,-1) node[left]{$-\frac{1}{n}$};   \draw[red] (0,-3) node[left]{$-1$};
			                       \draw[fill, red] (0,-1)  circle (0.05);  \draw[fill, red] (0,-3)  circle (0.05); 
		 
			\draw[blue,thick](1.5,-1)--(3,0); 	\draw[blue,thick](1.5,-1)--(1,-3);\draw[blue,thick](0,-3)--(1,-3);
			\draw[blue,dashed](1,-2)--(1,0); 	\draw[blue,dashed](1,-1)--(1,0); \draw[blue,dashed](1,0)--(1,-3); \draw[blue,dashed](0,-1)--(1.5,-1); \draw[blue,dashed](1.5,0)--(1.5,-1); 
			  		  \draw[] (-0.2,0) node[below] {0};
 		                \end{tikzpicture}
 \caption{The graph of $\rho=\rho_\epsilon(r),$ where $0<a<1$ is fixed and $b=a-\epsilon$ with $\epsilon \in (0,a)$ sufficiently small.}
   \label{fig1b}
 \end{figure}
For all $b<r<a$ we have $\rho'=\frac{n-1}{n\epsilon}$,  $n\rho+1=\frac{n-1}{ \epsilon } (r-a)$ and $ \rho+1 =\frac{ n-1 }{n \epsilon} (r-b).$  Hence $0<A<1$ and $B> 0$ on $(b,a)$; moreover,  
 \begin{equation}\label{m2}
 \Big |\int_b^a \Big (  h(A)C+h(A)D \Big )\,dr \Big |\le M_2.
 \end{equation} 
 Moreover,
 \[
 \lambda\int_0^1 (A+tB)^{k_1} dt \le E\le \Lambda \left (1+\int_0^1(A+tB)^{k_2} dt\right ).
 \]
Since
\[
 \int_0^1  (A+t B)^k dt = \epsilon^{-k} \int_0^1  \big (\epsilon A+ (1+\rho)^{n-1}\frac{n-1}{n} r t \big )^k dt, 
 \]
 it follows that for all $k\ge 0$
\[
M_3 \epsilon^{-k}  (1+\rho)^{k(n-1)}r^k \le  \int_0^1 (A+t B)^k dt  \le M_4 \epsilon^{-k}.
 \]
Thus
 \begin{equation}\label{m3}
 \Big |\int_b^a  EBC  \,dr \Big |\le M_5 (1+ \epsilon^{-k_2}).
 \end{equation}
 Since $BD\le 0$ on $(b,a)$, we have
 \[
 \int_b^a EBD\,dr \le \lambda \int_b^a M_3 \epsilon^{-k_1}  (1+\rho)^{k_1(n-1)}r^{k_1} BD\,dr \]
 \[
 =\lambda M_3 \epsilon^{-k_1}   \int_b^a(1+\rho)^{k_1(n-1)+2n-3}(n\rho +1)\rho'^2 r^{k_1+n+1}\,dr  \]
\[
 \le \frac{ b^{k_1+n+1}M_6}{\epsilon^{2n+k_1n}}  \int_b^a (r-b)^{k_1(n-1)+2n-3}(r-a) \,dr  =-M_7\epsilon^{-k_1-1}.
  \]
This, combined with  (\ref{m1})-(\ref{m3}), proves that 
\[
\lim_{\epsilon\to 0^+} \left (\epsilon^{k_2} \int_0^1 P_\epsilon (r)\,dr\right ) \le \lim_{\epsilon\to 0^+} (M_5 -M_7 \epsilon^{k_2-k_1-1}) =-\infty.
\]
Consequently, $\int_0^1 P_\epsilon (r)\,dr<0$ if  $\epsilon\in (0,a)$ is sufficiently small; this establishes (\ref{qm}).
\end{proof}

\section{The two-dimensional case} \label{sec-2-D}

The rest of the paper is devoted to  the Euler-Lagrange equation (\ref{EL-h})  in the two-dimensional case.  It is now standard \cite{KMS,MSv2} that  equation (\ref{EL-h})   can be  studied through  the  first-order partial differential relation:
\begin{equation}\label{PDR}
DU(x)\in  K = \left \{\begin{bmatrix} A\\ h(\det A) J A\end{bmatrix} : A\in \M^{2\times 2}\right\} \quad \mbox{a.e. in $\Omega,$}
\end{equation}
 for functions $U=(u,v)\colon \Omega \to \R^4,$ where $J=\begin{bmatrix}0&-1\\1&0\end{bmatrix};$ so  $\cof A=-JAJ$ for all $A\in \M^{2\times 2}.$ In particular, if $U=(u,v)\in W^{1,1}_{loc}(\Omega;\R^4)$ is a  solution  of  (\ref{PDR}),  then $h(\det Du)\cof Du=-Dv J$ a.e.\,on $\Omega$; thus,  $u\in W^{1,1}_{loc}(\Omega;\R^2)$ is a  weak solution of (\ref{EL-h}).

If  we define $W=(u,-Jv)$ for $U=(u,v)\colon \Omega \to \R^4,$  then the relation $DU(x)\in K $ is equivalent to the relation  \begin{equation}\label{set-K}
D W(x)\in  \K  = \left \{\begin{bmatrix} A\\ h(\det A)  A\end{bmatrix} : A\in \M^{2\times 2}\right\} \quad \mbox{a.e. in $\Omega.$}
  \end{equation} 
 
 \subsection{Some algebraic structures of the set $\K$} 
 
 Various {\em semi-convex hulls} of $\K$ have been studied in \cite{KMS}; we refer to \cite{D,KMS} for  definitions and further properties of these semi-convex hulls. For example,  in   the model case when $h(t)=t$, it has been proved in \cite{KMS} that the {\em rank-one convex hull} $\K^{rc}=\K$; therefore, the set $\K$ does not support any open structures of $T_N$-configurations \cite{KMS, Ta93}, which makes the construction  of counterexamples  in \cite{MSv2,Sz04,Ya1} impossible using such a set $\K.$     The following result give some algebraic restrictions on  the {\em quasiconvex hull} $\K^{qc}$ of $\K.$    
 
\begin{pro} Assume $h(0)=0$ and $h$ is one-to-one.    Let $A,B\in \M^{2\times 2}.$ Suppose that there exist  sequences $\{u_n\}$ and $\{v_n\}$ uniformly bounded in $W^{1,\infty}(\Omega;\R^2)$   such that
\begin{equation}\label{approx}
\begin{split}  u_n|_{\partial\Omega}=Ax, \quad &  v_n|_{\partial\Omega}=Bx,\\
  \lim_{n\to \infty}\int_\Omega |h(\det Du_n)   Du_n  & - Dv_n | \,dx=0,\end{split}
\end{equation} 
 Then $B=\mu h(\det A)  A$ for some $\mu >0.$ 
\end{pro}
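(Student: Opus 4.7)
The plan is to extract the matrix identity $B\,\adj A = \lambda I$ from the approximate equation by exploiting the null-Lagrangian structure of the cofactor, and then to treat the cases $\det A\ne 0$ and $\det A = 0$ separately.

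Concretely, I would compute the matrix-valued integral $\int_\Omega Dv_n\,(\cof Du_n)^T\, dx$ in two ways. On one hand, the Piola identity $\dv(\cof Du_n)=0$ (valid for any $u_n\in W^{1,\infty}$) together with $v_n|_{\partial\Omega}=Bx$ and the fact that $\cof$ is linear in dimension two (so $\int_\Omega \cof Du_n\, dx = (\cof A)\,|\Omega|$) give, after integrating by parts,
\[
\int_\Omega Dv_n\,(\cof Du_n)^T\, dx = B\,(\cof A)^T\,|\Omega|.
\]
On the other hand, setting $r_n := Dv_n - h(\det Du_n)Du_n$, the hypothesis $\|r_n\|_{L^1}\to 0$, the $L^\infty$-bound on $\cof Du_n$, and the pointwise identity $Du_n\,(\cof Du_n)^T = (\det Du_n)\,I$ yield
\[
\int_\Omega Dv_n\,(\cof Du_n)^T\, dx = \Bigl(\int_\Omega h(\det Du_n)\det Du_n\, dx\Bigr) I + o(1).
\]
Since the left-hand side is independent of $n$, comparing forces $\lambda_n := \frac{1}{|\Omega|}\int_\Omega h(\det Du_n)\det Du_n\, dx$ to converge to some $\lambda\in\R$ and yields $B\,\adj A = \lambda I$.

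If $\det A\ne 0$, inverting $\adj A=(\det A)A^{-1}$ yields $B=(\lambda/\det A)A$. To recast this as $B=\mu h(\det A)A$ with $\mu>0$, I would use that $h$ continuous and one-to-one with $h(0)=0$ is strictly monotone, so $t\mapsto th(t)$ has constant sign on $\R$ and vanishes only at $0$. Hence $\lambda$ and $\det A\cdot h(\det A)$ share this sign, giving $\mu:=\lambda/(\det A\cdot h(\det A))\ge 0$. Strict positivity follows by contradiction: if $\lambda=0$, the constant sign of $th(t)$ forces $h(\det Du_n)\det Du_n\to 0$ in $L^1$, hence $\det Du_n\to 0$ in measure and (by the $L^\infty$-bound) in $L^1$, contradicting the null-Lagrangian identity $\int_\Omega \det Du_n\, dx=\det A\cdot|\Omega|\ne 0$.

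The main obstacle is the case $\det A=0$, where $\adj A$ is not invertible. I would handle it by a rank argument: since $(\cof A)^T$ has rank at most $1$, so does $B(\cof A)^T$, and as $\lambda I$ has rank $2$ unless $\lambda=0$ we must have $\lambda=0$. The measure-theoretic chain above then yields $\det Du_n\to 0$ in every finite $L^p$; continuity of $h$ with $h(0)=0$ upgrades this to $h(\det Du_n)\to 0$ in every $L^p$, and the $L^\infty$-bound on $Du_n$ implies $h(\det Du_n)Du_n\to 0$ in $L^1$. Together with $\|r_n\|_{L^1}\to 0$, this forces $\int_\Omega Dv_n\, dx\to 0$, so $B=0$; since $h(\det A)A=0$, any $\mu>0$ realizes the conclusion.
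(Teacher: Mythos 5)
Your proof is correct and follows essentially the same route as the paper: both exploit the null-Lagrangian structure (two-dimensional minors depend only on boundary data) to arrive at the matrix identity $B\,\adj A=\lambda I$, which the paper writes out componentwise through the four wedge products $\alpha^i\wedge\beta^j$ while you package it directly via $\int_\Omega Dv_n(\cof Du_n)^T\,dx$, the Piola identity, and $Du_n(\cof Du_n)^T=(\det Du_n)I$; both then invoke the constant sign of $t\mapsto th(t)$ together with bounded convergence to dispose of the degenerate case. The only organizational difference is that you split cases on $\det A$ rather than on $\lambda$, but the underlying arguments coincide.
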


\begin{proof}\  If $X\in \M^{2\times 2}$ has $\chi^1$ and $\chi^2$ as its first and second rows, then we use $\chi^1\wedge \chi^2$ to denote $\det X.$  Let $\alpha^i, \beta^i$ be  the $i$th row of  $A, B,$  respectively,  and  let $\alpha_n^i$ and $ \beta_n^i$ be  the $i$th row of  $Du_n$ and $Dv_n,$  respectively, for $i=1,2.$
 From  the uniform boundedness of $\{|Du_n|\}$ and $\{|Dv_n|\}$ and the limit in (\ref{approx}), we have
 \begin{equation}\label{approx-1} 
\begin{split}  \lim_{n\to \infty}\int_\Omega |h(\alpha_n^1 \wedge \alpha_n^2)(\alpha_n^1\wedge \alpha_n^2) -\alpha_n^1\wedge \beta_n^2| \,dx=0, \\
 \lim_{n\to \infty}\int_\Omega |h(\alpha_n^1 \wedge \alpha_n^2)(\alpha_n^1\wedge \alpha_n^2) -\beta_n^1\wedge \alpha_n^2| \,dx=0,\\
 \lim_{n\to \infty}\int_\Omega |h(\alpha_n^1 \wedge \alpha_n^2)(\alpha_n^2\wedge \alpha_n^2) -\alpha_n^2\wedge \beta_n^2| \,dx=0, \\
 \lim_{n\to \infty}\int_\Omega |h(\alpha_n^1 \wedge \alpha_n^2)(\alpha_n^1\wedge \alpha_n^1) -\alpha_n^1\wedge \beta_n^1| \,dx=0.
 \end{split}
 \end{equation}
Since $\alpha_n^i\wedge \alpha_n^i=0$ and $
\int_\Omega  \alpha_n^i\wedge \beta_n^k   =  (\alpha^i\wedge \beta^k) |\Omega|$ for all $i,k,$  by (\ref{approx-1}), we have 
\begin{equation}\label{approx-11} 
 \alpha^1\wedge \beta^1=\alpha^2\wedge \beta^2 =0,   \quad \beta^1\wedge \alpha^2= \alpha ^1\wedge \beta ^2=\lambda, 
 \end{equation}
 where 
 \[
\lambda=  \lim_{n\to \infty}\aint_\Omega   h(\alpha_n^1\wedge \alpha_n^2)(\alpha_n^1\wedge \alpha_n^2)\,dx.
  \]
  
{\bf  Case 1:}  Assume $\alpha ^1\wedge \beta ^2=\beta^1\wedge \alpha^2=\lambda=0.$ In this case, since  $h(t)t\ne 0$  for all $t\ne 0$, we have
\[
\lim_{n\to \infty}\int_\Omega   |h(\alpha_n^1\wedge \alpha_n^2)(\alpha_n^1\wedge \alpha_n^2)|\,dx=0.
\]
Consequently,  via possibly  a subsequence,  we have $h(\alpha_n^1\wedge \alpha_n^2)(\alpha_n^1\wedge \alpha_n^2)\to 0,$ and thus  $\alpha_n^1\wedge \alpha_n^2=\det Du_n \to 0,$ as $n\to\infty,$  a.e.\,on $\Omega.$  This implies  $ \det A=\aint_\Omega \det Du_n \to 0;$ thus $\det A=0.$  From  $h(0)=0$ and (\ref{approx}), by Jensen's inequality and the bounded convergence theorem, it follows that
\[
|B|\le  \lim_{n\to \infty}\aint_\Omega  |Dv_n| \,dx =0;
 \]
thus $B=0;$ in this case,  $B=\mu h(\det A)A=0$ for any $\mu>0.$ 

{\bf Case 2:}  Assume  $\alpha ^1\wedge \beta ^2=\beta^1\wedge \alpha^2=\lambda\ne 0.$ In this case, all $\alpha^i$ and $\beta^i$ are nonzero. Moreover, by (\ref{approx-11}), we have $\beta^1=t\alpha^1$ and $\beta^2=s\alpha^2$ for some constants $t,s\in\R;$  thus  $\lambda=\alpha ^1\wedge \beta ^2=\beta^1\wedge \alpha^2=t\alpha^1\wedge \alpha^2=s\alpha^1\wedge \alpha^2.$ Since $\lambda\ne 0,$ it follows that $t=s\ne 0$ and $\det A=\alpha^1\wedge \alpha^2\ne 0.$ Therefore, in this case, we also have $B=tA=\mu h(\det A) A,$ where 
\[
\mu = {\lambda}/{ [h(\det A)\det A}]>0,
\]
 as the function $th(t)$ remains the same sign as $\lambda\ne 0$ for all $t\ne 0.$ \end{proof}
 
  \subsection{A  rescaled Dirichlet problem}
Let  $\det A\ne 0$ and $B=\mu h(\det A)A$ be such that there exist  sequences $\{u_n\}$ and $\{v_n\}$ uniformly bounded in $W^{1,\infty}(\Omega;\R^2)$  and satisfying (\ref{approx}).   We introduce the rescaling:
\[
\tilde u_n=A^{-1}u_n,\quad \tilde v_n=h(\det A) A^{-1}v_n,\quad \tilde h(t)=\frac{h(t\det A)}{h(\det A)} 
\]
to obtain  $ \tilde u_n(x)|_{\partial\Omega}= x,\;  
 \tilde v_n(x)|_{\partial\Omega}=\mu x,$ and 
\[
 \lim_{n\to \infty} \int_\Omega |\tilde h(\det D\tilde u_n)  D\tilde u_n - D\tilde v_n|\,dx =0,
 \]
 where   $\tilde h(0)=0$, $\tilde h(1)=1$ and $\tilde h$ is {\em strictly increasing} on $\R.$
 
In what follows, we  drop  the tilde signs and study whether  for some positive $\mu\ne 1$ there exist  
sequences $\{ u_n\}$ and $\{ v_n\}$ uniformly bounded in $W^{1,\infty}(\Omega;\R^2)$  such that 
 \begin{equation}\label{approx-a}\begin{cases} 
 \displaystyle{\lim_{n\to \infty} \int_\Omega | h(\det D u_n)  D u_n - D v_n|\,dx =0,}\\[2ex]
  u_n(x)|_{\partial\Omega}= x,\;\;\; 
   v_n(x)|_{\partial\Omega}=\mu x,
\end{cases}
\end{equation}
where  $ h\in C(\R)$ is such that  
\begin{equation}\label{spec-h}
\mbox{$ h(0)=0, \; \; h(1)=1$ and $ h$ is {\em strictly increasing} on $\R.$}
\end{equation}

 \begin{pro}\label{lem43}  Suppose that  
sequences $\{ u_n\}$ and $\{ v_n\}$ are uniformly bounded in $W^{1,\infty}(\Omega;\R^2)$   satisfying (\ref{approx-a}) such that $\det Du_n\ge \epsilon_0$ a.e.\,on $\Omega,$ where $\epsilon_0>0$ is a constant.  Then $\mu =1.$\end{pro}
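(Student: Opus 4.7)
The plan is to combine three integral identities involving $d_n := \det Du_n$—two of which come from null-Lagrangian identities determined purely by the Dirichlet boundary data—so as to force $h(d_n) \to \mu$ in $L^2(\Omega)$, after which the strict monotonicity of $h$ together with the constraint $u_n|_{\partial\Omega} = x$ pins down $\mu = 1$. First I exploit the fact that $\det Dw$, $\det D(w^1, z^2)$, and $\det D(z^1, w^2)$ are null Lagrangians for $w, z \in W^{1,\infty}(\Omega;\R^2)$; a direct $2\times 2$ calculation gives the algebraic identity $\cof Du_n : Dv_n = \det D(u_n^1, v_n^2) + \det D(v_n^1, u_n^2)$. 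Since $u_n|_{\partial\Omega} = x$ and $v_n|_{\partial\Omega} = \mu x$, this produces three identities valid for every $n$:
\[
\int_\Omega \det Du_n\,dx = |\Omega|, \quad \int_\Omega \det Dv_n\,dx = \mu^2 |\Omega|, \quad \int_\Omega \cof Du_n : Dv_n\,dx = 2\mu |\Omega|.
\]

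Next I pass to the limit. Writing $E_n = Dv_n - h(d_n) Du_n$, the hypothesis says $E_n \to 0$ in $L^1$, while $\{Du_n\}$, $\{\cof Du_n\}$ and $\{h(d_n) \cof Du_n\}$ stay bounded in $L^\infty$. Using $\cof A : A = 2 \det A$ and the expansion $\det(A+E) = \det A + \cof A : E + \det E$ in $\MM$, together with the pointwise bound $|\det E_n| \le \tfrac12 |E_n|^2 \le C |E_n|$, each of the remainder integrals $\int \cof Du_n : E_n$, $\int h(d_n) \cof Du_n : E_n$, and $\int \det E_n$ tends to zero. Inserting the expansions $\cof Du_n : Dv_n = 2 h(d_n) d_n + \cof Du_n : E_n$ and $\det Dv_n = h(d_n)^2 d_n + h(d_n) \cof Du_n : E_n + \det E_n$ into the identities above and letting $n \to \infty$ yields
\[
\int_\Omega h(d_n) d_n\,dx \to \mu |\Omega|, \qquad \int_\Omega h(d_n)^2 d_n\,dx \to \mu^2 |\Omega|.
\]

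Combined with $\int_\Omega d_n\,dx = |\Omega|$, the algebraic manipulation
\[
\int_\Omega (h(d_n) - \mu)^2 d_n\,dx = \int_\Omega h(d_n)^2 d_n\,dx - 2\mu \int_\Omega h(d_n) d_n\,dx + \mu^2 \int_\Omega d_n\,dx \to 0
\]
together with the lower bound $d_n \ge \epsilon_0 > 0$ implies that $h(d_n) \to \mu$ in $L^2(\Omega)$. Passing to a subsequence, $h(d_n) \to \mu$ a.e., and by continuity and strict monotonicity of $h$, $d_n \to h^{-1}(\mu)$ a.e. The uniform bound $d_n \le \|Du_n\|_{L^\infty}^2$ and dominated convergence give $d_n \to h^{-1}(\mu)$ in $L^1(\Omega)$; combined with $\int_\Omega d_n\,dx = |\Omega|$ this forces $h^{-1}(\mu) = 1$, i.e., $\mu = h(1) = 1$.

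Conceptually, the main obstacle is identifying the coupling between the boundary data of $u_n$ and $v_n$ that produces a nontrivial constraint on $\mu$. This is supplied by the mixed null-Lagrangian identity $\int_\Omega \cof Du_n : Dv_n\,dx = 2\mu |\Omega|$, which—via the approximate relation $Dv_n \approx h(d_n) Du_n$—ties the limit of $\int h(d_n) d_n$ to $\mu$, while $\int \det Dv_n\,dx = \mu^2 |\Omega|$ simultaneously ties $\int h(d_n)^2 d_n$ to $\mu^2$. Once these two integral limits are in hand, the $L^\infty$ bounds and $L^1$ convergence of $E_n$, together with the crucial hypothesis $d_n \ge \epsilon_0$, make the rest of the argument routine.
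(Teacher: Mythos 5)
Your proof is correct and follows essentially the same route as the paper: both exploit the null-Lagrangian identities determined by the boundary data to obtain $\int_\Omega h(d_n)d_n\to\mu|\Omega|$ and $\int_\Omega h(d_n)^2 d_n\to\mu^2|\Omega|$, combine these with $\int_\Omega d_n=|\Omega|$ to get $\int_\Omega (h(d_n)-\mu)^2 d_n\to 0$, and then use the lower bound $d_n\ge\epsilon_0$ together with strict monotonicity of $h$ to conclude $\mu=1$. The only cosmetic difference is that you use the symmetric null Lagrangian $\cof Du_n:Dv_n$ where the paper uses the single wedge $Du_n^1\wedge Dv_n^2$, and you spell out the error bookkeeping via $E_n=Dv_n-h(d_n)Du_n$ which the paper leaves implicit.
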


\begin{proof}\  From (\ref{approx-a}) we have
 \[
 \begin{split}  \lim_{n\to \infty}\int_\Omega  h(\det D u_n)(\det D u_n)\,dx =  \lim_{n\to \infty}\int_\Omega Du_n^1\wedge Dv_n^2\,dx=\mu|\Omega|, \\ 
 \lim_{n\to \infty}\int_\Omega  h^2(\det D u_n)(\det D u_n) \,dx=  \lim_{n\to \infty}\int_\Omega \det Dv_n\,dx =\mu^2 |\Omega|,
 \end{split}
 \]
and thus 
 \begin{equation}\label{approx-b}
 \lim_{n\to \infty} \int_\Omega  (h(\det D u_n)-\mu)^2  \det D u_n  \,dx  =0.
 \end{equation}
Since $\det Du_n\ge \epsilon_0$ a.e.\,on $\Omega,$  it follows that $h(\det D u_n)-\mu\to 0$ strongly  in $L^2(\Omega)$ as $n\to\infty.$ Since $h$ is one-to-one, we have $\det Du_n\to h^{-1}(\mu)$ a.e.\,on $\Omega$ as, perhaps 
via a subsequence, $n\to \infty.$ Finally, as  $\int_\Omega \det Du_n\,dx=|\Omega|,$ we have $h^{-1}(\mu)=1$ and thus $\mu=1.$
\end{proof}

Uniformly bounded  sequences $\{ u_n\}$ and $\{ v_n\}$ in $W^{1,\infty}(\Omega;\R^2)$  satisfying (\ref{approx-a}) can be constructed {\em provided} that   the  Dirichlet problem:   
\begin{equation}\label{pdr-1}\begin{cases}
 h(\det D u) Du=D v  \quad a.e. \; \Omega,\\
 u(x)|_{\partial\Omega}=x, \\
   v(x)|_{\partial\Omega}=\mu  x,
\end{cases}
\end{equation}
has a Lipschitz solution  $(u,v)\in W^{1,\infty}(\Omega;\R^4).$    

 \begin{remk} (i) If $\mu=1,$ then $(u,v)=(x,x)$ is a Lipschitz solution of problem (\ref{pdr-1}). If  (\ref{pdr-1}) has a Lipschitz solution $(u,v)$ for $\mu\ne 1$ then it will have {\em infinitely many} Lipschitz solutions.
 
 (ii) For some positive $\mu \ne 1$,  (\ref{pdr-1}) may have solutions $(u,v)\in C(\bar\Omega;\R^4)\cap W^{1,1}(\Omega;\R^4).$  
  For example (see also Example \ref{ex1}), let $\Omega$ be the unit open disk in $\R^2$ and $\mu >1$ and   $\lambda >1$ be  such that $h(\lambda)=\mu.$ Define the radial functions  $u(x)= \phi(|x|) x$ and $ v(x)=\mu u (x),$ where 
\[
\phi(r)=\begin{cases} 0 & \mbox{if $0\le r\le \sqrt{\frac{\lambda-1}{\lambda }},$}\\
\sqrt{\lambda -\frac{\lambda -1}{r^2}} & \mbox{if $ \sqrt{\frac{\lambda -1}{\lambda }}\le r\le 1.$}
\end{cases}
\]
Then $(u,v)\in C(\bar\Omega;\R^4)\cap W^{1,p}(\Omega;\R^4)$ for  all $1\le p<2$
is a solution of  (\ref{pdr-1}). However,   $(u,v)\notin  W^{1,p}(\Omega;\R^4)$ for any $p\ge 2.$
 \end{remk}

The following  result asserts that if  problem (\ref{pdr-1}) has a Lipschitz solution  for some positive $\mu\ne 1$ then $\det  Du$ must change signs  on $\Omega.$

 \begin{thm} \label{thm44}  Let  $\mu>0$ and $u,v\colon \bar\Omega\to\R^2$ be Lipschitz solutions of (\ref{pdr-1}) such that $\det Du \ge 0$   a.e.\,in $\Omega.$ Then $\det Du=1$ a.e.\,on $\Omega;$ thus, $\mu=1$ and $u= v$ on $\bar\Omega.$
 \end{thm}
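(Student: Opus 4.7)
My plan is to mimic the integral-identity argument used in Proposition \ref{lem43}, but then handle the degenerate set $\{\det Du=0\}$, where that proof breaks down, by combining area-formula information for both $u$ and $v$ with a perimeter estimate.

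First I would exploit the null Lagrangian structure in two dimensions. Since $\det$ and each mixed wedge $Du^i\wedge Dv^j$ are null Lagrangians, the boundary conditions $u|_{\partial\Omega}=x$ and $v|_{\partial\Omega}=\mu x$ yield, independently of the PDE,
\[
\int_\Omega \det Du\,dx=|\Omega|,\quad \int_\Omega Du^1\wedge Dv^2\,dx=\mu|\Omega|,\quad \int_\Omega \det Dv\,dx=\mu^2|\Omega|.
\]
Substituting $Dv=h(\det Du)\,Du$ turns the three integrands into $\det Du$, $h(\det Du)\det Du$, and $h(\det Du)^2\det Du$, and a short algebraic combination gives the key identity
\[
\int_\Omega \bigl(h(\det Du)-\mu\bigr)^2 \det Du\,dx=\mu^2|\Omega|-2\mu^2|\Omega|+\mu^2|\Omega|=0.
\]
Since $\det Du\ge 0$, the integrand is nonnegative, so at a.e.\ $x$ either $\det Du=0$ or $h(\det Du)=\mu$; strict monotonicity of $h$ with $h(1)=1$ then forces $\det Du\in\{0,\lambda\}$ a.e., where $\lambda:=h^{-1}(\mu)$. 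Setting $E=\{\det Du=\lambda\}$ and $F=\Omega\setminus E$, the identity $\int_\Omega \det Du=|\Omega|$ gives $\lambda|E|=|\Omega|$, so $\lambda\ge 1$.

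It remains to exclude $\lambda>1$. Suppose $\lambda>1$, so $|F|>0$; on $F$ we have $Dv=h(0)\,Du=0$. Applying the area formula to the Lipschitz map $v$ and using $\det Dv\ge 0$ together with the degree computation $\deg(v,\Omega,y)=1$ for $y\in\mu\Omega$ (coming from $v|_{\partial\Omega}=\mu x$), I would conclude that $N(v,\Omega,y)=1$ a.e.\ on $\mu\Omega$ and $=0$ a.e.\ outside. Thus $v$ is essentially one-to-one onto $\mu\Omega$ and $|v(F)|=0$, while $v|_E$ is essentially bijective onto $\mu\Omega$ with Jacobian $\mu^2\lambda$.

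The main obstacle is turning this geometric tension into a quantitative contradiction. My approach is a perimeter / isoperimetric estimate on the essential boundary $\partial^*F$: since $v$ is Lipschitz (with constant $L$, say) and $Dv=0$ a.e.\ on $F$, $v$ is essentially constant on the ACL-components of $F$, so near any $x_0\in\partial^*F\cap E^*$ the set $v^{-1}(B_\delta(p))\cap E$ contains a $(\delta/L)$-tube along $\partial^*F$ of 2-D measure $\gtrsim \delta\,\mathcal H^1(\partial^*F\cap\text{nbhd})/L$, whereas essential bijectivity of $v|_E$ with Jacobian $\mu^2\lambda$ forces this same set to have measure $\le \pi\delta^2/(\mu^2\lambda)$. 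Letting $\delta\to 0$ yields $\mathcal H^1(\partial^*F)=0$, and the isoperimetric inequality for sets of finite perimeter then gives $\min(|F|,|\Omega\setminus F|)=0$; since $|E|=|\Omega|/\lambda\in(0,|\Omega|)$, this forces $|F|=0$, contradicting $\lambda>1$. Hence $\lambda=1$, so $\det Du\equiv 1$ a.e., which gives $\mu=h(1)=1$; the equation then reduces to $Dv=Du$ a.e., and $(v-u)|_{\partial\Omega}=0$ yields $v\equiv u$ on $\bar\Omega$.
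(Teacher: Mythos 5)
Your reduction to $\det Du\in\{0,\lambda\}$ a.e., with $\lambda=h^{-1}(\mu)$, $E=\{\det Du=\lambda\}$ and $\lambda|E|=|\Omega|$, is correct and coincides with the first half of the paper's proof (there written as $\aint_\Omega(g-\mu)^2\det Du\,dx=0$ with $g=h(\det Du)$). The two arguments part ways at the step of ruling out $|F|>0$. The paper's key observation, which your proposal never makes, is that $Dv=h(\det Du)Du$ equals $0$ on $F$ and $\mu Du$ on $E$, so that $\det Dv=\mu^2\lambda\,\chi_E$; since $|Dv|\le M:=\mathrm{Lip}(v)$ this gives $|Dv|^2\le L\,\det Dv$ a.e.\ with $L=M^2/(\mu^2\lambda)$, i.e.\ $v$ is an $L$-quasiregular map, and it is non-constant because $v|_{\partial\Omega}=\mu x$. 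Reshetnyak's theorem (a non-constant quasiregular map has a.e.\ positive Jacobian) then yields $|F|=0$ at once, and $\lambda=1$ follows from $\aint_\Omega\det Du=1$.

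Your replacement of this step by a perimeter estimate on $F$ does not work. The pivotal claim --- that ``$v$ is essentially constant on the ACL-components of $F$'' and hence $v^{-1}(B_\delta(p))\cap E$ contains a $(\delta/L)$-tube along a 1-dimensional piece of $\partial^*F$ --- is unjustified. A Lipschitz map with $Dv=0$ a.e.\ on a measurable set $F$ need not be constant, or even locally constant, on $F$: if, say, $F=C\times(0,1)$ with $C$ a fat Cantor set, then $v$ is constant on each vertical segment but $v(\cdot,0)|_C$ can take a continuum of values, because $C$ is totally disconnected and $\partial_x v$ is unconstrained on the complement. So there is no single value $p$ whose preimage sweeps out a tube along $\partial^*F$; a density-point estimate at one $x_0$ only gives $\tfrac12\pi(\delta/L)^2\lesssim\pi\delta^2/(\mu^2\lambda)$, i.e.\ a harmless relation between constants, not $\mathcal H^1(\partial^*F)=0$. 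You also implicitly assume $F$ has finite perimeter (so that $\partial^*F$ and the relative isoperimetric inequality are usable), which is not known a priori for a level set of the $L^\infty$ function $\det Du$. The structural input that makes the conclusion true is precisely the quasiregularity of $v$, and the argument cannot succeed without invoking it (or an equivalent).
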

 
 \begin{proof}\  Write $g=h(\det Du).$ Since  $gDu=Dv$, we have
 \[
 g^2\det Du=\det Dv,\quad 2g\det Du= Dv:\cof Du \quad a.e.\; \Omega.
 \]
 Integrating over $\Omega$  and using  the given boundary conditions and the fact that $\det A$ and $B:\cof A$ are null-Lagrangians, we have that
 \[
\aint_\Omega \det Du=1,\quad  \aint_\Omega g^2 \det Du =\mu^2, \quad  \aint_\Omega g \det Du  =\mu,
 \]
and thus 
 \[
\aint_\Omega (g-\mu)^2\det Du = \aint_\Omega \Big  ( g^2 \det Du -2\mu g\det Du +\mu^2  \det Du\Big )=0.
\]
Since $\det Du\ge 0$ a.e.\,in $\Omega,$ it follows that  $g =h(\det Du)=\mu$ a.e.\,on the set $E=\{x\in\Omega:\det Du(x)> 0\}.$ As $h$ is one-to-one, we have $\det Du=\lambda \chi_E$, where $h(\lambda)=\mu.$ Therefore, $\det Dv=\mu^2\lambda \chi_E.$ Since $v$ is Lipschitz, we assume  $|Dv|\le M$ for some $M>0;$  let $L=\frac{M^2}{\mu^2\lambda}.$ Then it is easily seen that
\[
|Dv(x)|^2\le L\det Dv(x) \quad a.e.\;\Omega.
\]
Consequently,  $v$ is a non-constant  {\em $L$-quasiregular map} on $\Omega.$ It is well-known  that a non-constant $L$-quasiregular mapping cannot have zero Jacobian determinant on a set of positive measure  (see \cite{LV,Re}); thus, it follows that $|\Omega\setminus E|=0.$ This proves   $\det Du=\lambda$ a.e.\,on $\Omega$, from which we have
$\lambda=\aint_\Omega \det Du =1;$ hence $\mu=1$ and $u=v$ on $\bar\Omega.$
 \end{proof}


\begin{thebibliography}{99}
   
 \bibitem{Ba2} 
 J. M. Ball.  Global invertibility of Sobolev functions and the interpenetration of matter. {\it Proceedings  of the Royal Society of Edingburgh}. {\bf 88A} (1981), 315--328.

 
   
  \bibitem{CZ92}
{J. Chabrowski and K. Zhang.}   Quasi-monotonicity and perturbated systems with critical growth. {\it Indiana Univ. Math. J}. {\bf 41}(2)  (1992), 483--504.

 
 \bibitem{Da81}  B. Dacorogna.  A relaxation theorem and its application to the equilibrium of gases. {\it Arch.
Rational Mech. Anal}. {\bf 77} (1981),  359--386.

\bibitem{D}
{B. Dacorogna.} ``Direct methods in the calculus of variations." Second Edition. Springer-Verlag, Berlin, Heidelberg, New York, 2008. 

 

\bibitem{ESG05}  L.C. Evans, O. Savin and W. Gangbo.  Diffeomorphisms and nonlinear heat flows. {\it SIAM J. Math. Anal}.  {\bf 37}(3) (2005), 737--751.


   \bibitem{Fu87}
 M. Fuchs. Regularity theorems for nonlinear systems of partial differential equations under natural ellipticity conditions. {\it Analysis}.  {\bf 7} (1987),  83--93.
 
  
\bibitem{Ha95}
 C. Hamburger. Quasimonotonicity, regularity and duality for nonlinear systems of partial differential equations. {\it  Annali di Matematica pura ed applicata (IV)}. Vol. {\bf CLXIX} (1995),  321--354.

 

\bibitem{KMS} 
 B. Kirchheim, S. M\"uller and V. \v Sver\'ak. Studying nonlinear pde by geometry in matrix space, in ``Geometric analysis and nonlinear partial differential equations", 347--395, Springer, Berlin, 2003.

\bibitem{KM06}
 J. Kristensen and G. Mingione. The singular set of minima of integral functionals. {\it Arch. Ration. Mech. Anal}. {\bf 180}(3) (2006), 331--398.

\bibitem{Kr}
 S. Kr\"omer. Global invertibility for orientation-preserving
Sobolev maps via invertibility on or near the
boundary. {\it Arch. Ration. Mech. Anal}.  {\bf 238} (2020), 1113--1155.  

\bibitem{La96}
 R. Landes. Quasimonotone versus pseudomonotone. {\it Proc. Royal Soc. Edinburgh}. {\bf 126A} (1996), 705--717.

 
 \bibitem{LV}
 O. Lehto and K. Virtanen.  ``Quasiconformal Mappings in the Plane," Springer-Verlag, New York, 1973.
 
   \bibitem{MM}
 M. Marcus and V. J. Mizel. Transformations by functions in Sobolev spaces and lower semicontinuity for parametric variational problems. {\it Bull. Amer. Math. Soc}.  {\bf 79}(4) (1973), 790--795.


 
 
\bibitem{MSv2} 
 S. M\"uller and V. \v Sver\'ak. Convex integration for Lipschitz mappings and counterexamples to regularity. {\it Ann. of Math  (2)}.  {\bf 157}(3)   (2003), 715--742.

 
\bibitem{Re}
 Yu. G. Reshetnyak.  ``Space Mappings with Bounded Distortion,"
Transl. Math. Monograph, 73, Amer. Math. Soc., 1989.

 
  \bibitem{Sz04}
 L. Sz\'ekelyhidi, Jr. The regularity of critical points of polyconvex functionals. {\it Arch. Ration. Mech.  Anal}.  {\bf 172}(1) (2004), 133--152.
 
 \bibitem{Ta02} 
 A. Taheri.  On critical points of functionals
with polyconvex integrands. {\it Journal of Convex Analysis}. {\bf 9}(1) (2002),  55--72.
  

 \bibitem{Ta93} 
 L. Tartar. Some remarks on separately convex functions,  in ``Microstructure and
Phase Transitions," IMA Vol. Math. Appl. 54 (D. Kinderlehrer, R. D. James,
M. Luskin and J. L. Ericksen, eds.), Springer-Verlag, New York (1993), 191--204.
  
 

 \bibitem{Ya1} 
 B. Yan.  Convex integration for diffusion equations and Lipschitz
solutions of polyconvex gradient flows. {\it Calc. Var. Partial Differential Equations}. (2020), 59:123.
 
  \bibitem{Zh86}
 K. Zhang. On the Dirichlet problem for a class of quasilinear elliptic systems of
partial differential equations in divergence form, in ``Proceedings  of Tianjin Conference on Partial Differential
Equations in 1986," (S. S. Chern ed.) Lecture Notes in Mathematics, {\bf 1306}, pp.
262--277. Springer, Berlin, Heidelberg, New York, 1988.

 

\end{thebibliography}
\end{document}